\documentclass[11pt,onecolumn]{article}

\usepackage[a4paper,margin=1.15in]{geometry}
\usepackage{lmodern}
\usepackage{microtype}
\usepackage[dvipsnames]{xcolor}
\usepackage{amsmath}
\usepackage{amssymb}
\usepackage{amsthm}
\usepackage{aliascnt}
\usepackage{graphicx}
\usepackage{mathtools}
\usepackage{authblk}
\usepackage{enumitem}
\usepackage{titlesec}
\usepackage{fancyhdr}
\usepackage{hyperref}
\usepackage{cleveref}

\definecolor{linkblue}{HTML}{1F4E79}
\hypersetup{
    colorlinks=true,
    linkcolor=linkblue,
    citecolor=linkblue,
    urlcolor=linkblue
}

\titleformat{\section}{\large\bfseries}{\thesection}{0.75em}{}
\titleformat{\subsection}{\normalsize\bfseries}{\thesubsection}{0.75em}{}
\titlespacing*{\section}{0pt}{2.3ex plus 0.8ex minus 0.2ex}{1.1ex plus 0.2ex}
\titlespacing*{\subsection}{0pt}{1.8ex plus 0.6ex minus 0.2ex}{0.8ex plus 0.2ex}

\setlist[itemize]{leftmargin=2.2em,itemsep=0.25ex,topsep=0.6ex}
\usepackage{tikz-cd}

\theoremstyle{plain}
\newtheorem{theorem}{Theorem}[section]

\newaliascnt{corollary}{theorem}
\newtheorem{corollary}[corollary]{Corollary}
\aliascntresetthe{corollary}

\newaliascnt{proposition}{theorem}
\newtheorem{proposition}[proposition]{Proposition}
\aliascntresetthe{proposition}

\newaliascnt{lemma}{theorem}
\newtheorem{lemma}[lemma]{Lemma}
\aliascntresetthe{lemma}

\theoremstyle{definition}
\newaliascnt{definition}{theorem}
\newtheorem{definition}[definition]{Definition}
\aliascntresetthe{definition}

\newaliascnt{example}{theorem}

\aliascntresetthe{example}

\newaliascnt{remark}{theorem}
\newtheorem{remark}[remark]{Remark}
\aliascntresetthe{remark}

\crefname{theorem}{theorem}{theorems}
\Crefname{theorem}{Theorem}{Theorems}
\crefname{corollary}{corollary}{corollaries}
\Crefname{corollary}{Corollary}{Corollaries}
\crefname{proposition}{proposition}{propositions}
\Crefname{proposition}{Proposition}{Propositions}
\crefname{lemma}{lemma}{lemmas}
\Crefname{lemma}{Lemma}{Lemmas}
\crefname{definition}{definition}{definitions}
\Crefname{definition}{Definition}{Definitions}
\crefname{example}{example}{examples}
\Crefname{example}{Example}{Examples}
\crefname{remark}{remark}{remarks}
\Crefname{remark}{Remark}{Remarks}

\newcommand{\C}{\mathbb{C}}

\newcommand{\littletaller}{\mathchoice{\vphantom{\big|}}{}{}{}}
\newcommand\restr[2]{{
		\left.\kern-\nulldelimiterspace
		#1 
		\littletaller 
		\right|_{#2} 
}}

\title{Operator Systems in Duality}
\author[]{Markus Dannemüller}
\author[]{Tim Netzer}
\affil[]{Department of Mathematics, University of Innsbruck}
\date{\today}

\begin{document}

\maketitle

\begin{abstract}
The usual dual of an infinite-dimensional order-unit space need not
carry an order unit, and this is the main reason why duality for operator systems is notoriously hard.
Rather than trying to compute a canonical dual object, we introduce a relative
notion of duality for operator systems:\ Two systems are in duality when a pairing of their underlying vector spaces induces
conic pairings at every matrix level.  We develop basic
properties of this notion, showing that beyond finite-dimensional operator systems, also separable
$B(H)$ and $C^*$-algebras with a faithful trace admit duals, while some other operator systems do not.  We prove that
subsystems, quotients and suitable tensor products of systems with duals again admit duals. As
consequences, we show that finite-level maximality is dual to finite-level minimality, and
we establish weakly continuous and weak$^*$ closed realizations into $\ell^\infty$-products of
matrix algebras.
\end{abstract}

%\tableofcontents

\section{Introduction}

Duality is one of the organizing principles of mathematics.  It allows one to
replace objects by spaces of tests on them, to turn geometric questions into
separation problems, and to translate constructions on one side into sometimes
simpler or more revealing constructions on the other.  In functional analysis,
convexity, and operator algebra, duality is especially powerful because
positivity is frequently detected by positive functionals.  Understanding the
right dual object therefore often determines
which order, topological, and representation-theoretic tools are available.

Operator systems provide the intrinsic, order-theoretic language for
unital self-adjoint subspaces of $C^*$-algebras.  Since the Choi--Effros
representation theorem, they have become a basic framework for studying
matrix-ordered structures and completely positive maps \cite{ChoiEffros1977,Paulsen2002}.
In finite
dimensions, duality is easy to understand, since  the vector-space dual of an operator
system has canonical matrix cones, and the resulting dual operator system is indeed a
standard tool in finite-dimensional operator systems theory.

In infinite dimensions, however, computing duals of operator systems is
notoriously problematic: though the dual cone is always a natural positive
cone, it may fail to admit a distinguished order unit making the dual space
into another order-unit space, and hence it may fail to underlie an operator
system.  For example, the Banach dual of $C(X)$ is the space of signed
measures on $X$, and the positive cone of this dual does in general not admit an
order unit.

There are several ways to deal with this obstruction in the literature.  One
approach is to enlarge the category. In \cite{Ng2011}  self-adjoint
subspaces of $B(H)$ with their induced matrix norms and matrix cones are studied,
without requiring the existence of an order unit. In later work \cite{Ng2022}, a dualizability condition is developed, involving a change of the dual norm to an equivalent matrix norm that makes the system a good non-unital dual.  
A related recent approach is developed in \cite{FritzNetzer2026Modules}, where
regularly ordered Banach spaces form the enriching category for a
module-theoretic treatment of nonunital operator systems and their duals.
A different line of work uses unitalization constructions for nonunital ordered
spaces \cite{Werner2004,vanDobben2026} or approximately unital operator
systems  \cite{Huang2011}.  
All these theories show that
the missing order unit can be repaired, but only by adding extra structure,
changing norms, or passing to a larger nonunital  category.

In this paper we take a different approach. Instead of computing \emph{the} dual of an operator system, we define what it means for two operator systems to be in duality.
This notion, which is well-established for classical spaces and cones, keeps the dual object more flexible. The extension to operator systems is quite straightforward, and it turns out that surprisingly many operator systems \emph{do} admit at least one dual.

We  first  show that  basic examples such as $B(H)$ for separable $H$ and  $C^*$-algebras with a
faithful trace admit duals; they are actually even self-dual as operator systems. We next prove
results about classical constructions: Subsystems, quotients, and suitable tensor products of systems with a dual will again have duals. This shows for example that all systems with a separable $C^*$-envelope have duals. But we also provide examples of operator systems that do not admit any dual in our sense.

We then study consequences of  duality. We prove  that finite-level maximality is dual to finite-level minimality, extending results from \cite{DannemuellerNetzer2026Subhomogeneous} to infinite dimensions. We further show that
systems with a dual have a weakly continuous, and under additional assumptions also a weak$^*$ closed realization in
$\ell^\infty$-products of matrix algebras. 

\section{Preliminaries}

\subsection{Duality of spaces and convex cones}
\label{sec:space}

A good reference for the concepts in this section is \cite{Barvinok2002}.

Let $V$ and $W$ be real vector spaces.  A \emph{duality}, or \emph{dual pairing},
between $V$ and $W$ is a bilinear form
\[
    \langle \cdot,\cdot\rangle \colon V\times W\to \mathbb{R}
\]
which separates points on both sides: For every nonzero $v\in V$ there is
some $w\in W$ with $\langle v,w\rangle\neq 0$, and for every nonzero
$w\in W$ there is some $v\in V$ with $\langle v,w\rangle\neq 0$. 
When such a pairing is fixed, we say that $V$ and $W$ are in duality.  

The standard example is the duality between a real vector space $V$ and its
algebraic dual space $V^*$, the space of all real-linear functionals
$\varphi\colon V\to\mathbb{R}$.  The pairing is evaluation:
\[
    \langle v,\varphi\rangle=\varphi(v),
    \qquad v\in V,\ \varphi\in V^*.
\]

In finite dimensions, every duality is of this form up to the natural
identification of one space with the dual of the other.  Indeed, a duality
between  $V$ and $W$ induces injective linear maps
\[
    V\hookrightarrow W^*,\qquad v\mapsto \langle v,\cdot\rangle,
    \qquad\text{and}\qquad
    W\hookrightarrow V^*,\qquad w\mapsto \langle \cdot,w\rangle .
\]
Hence $\dim V\leqslant \dim W$ and $\dim W\leqslant \dim V$, so $\dim V=\dim W$ and
both maps are isomorphisms.  After identifying $W$ with $V^*$ by the second
isomorphism, the original pairing becomes precisely the evaluation pairing.
Thus, in finite dimensions, the canonical pairing of a space with its
algebraic dual is the only example, up to isomorphism of dual pairs.

In infinite dimensions this uniqueness fails.  
For example, let
$X$ be a compact Hausdorff space and let $\mu$ be a positive regular Borel
measure on $X$ with full support, meaning that every nonempty open subset of
$X$ has positive measure.  Then the real vector space $C(X,\mathbb R)$ of continuous
real-valued functions on $X$ is in duality with itself under the pairing
\[
    \langle f,g\rangle=\int_X f(x)g(x)\,d\mu(x),
    \qquad f,g\in C(X,\mathbb R).
\]
The full-support assumption guarantees separation: if $f\neq 0$, then there
is a nonempty open set on which $f$ is nonzero, and hence
$\langle f,f\rangle=\int_X f(x)^2\,d\mu(x)>0$. This is not
the full algebraic duality between $C(X,\mathbb R)$ and its algebraic dual; we only obtain those functionals of the
form
\[
    f\longmapsto \int_X f(x)g(x)\,d\mu(x)
\]
for some $g\in C(X,\mathbb R)$. It is not even the topological duality when $C(X,\mathbb R)$ is considered a Banach space; here the dual space consists of all signed Borel measures on $X$, and not all of them need to have a continuous density with respect to $\mu.$

A duality also determines a natural weak topology.  If $V$ and $W$ are in
duality, the weak topology on $V$ induced by $W$ is
the coarsest vector-space topology on $V$ for which all maps
\[
    V\to\mathbb{R},\qquad v\mapsto \langle v,w\rangle,
    \qquad w\in W,
\]
are continuous. 
Because the pairing separates points, this topology is Hausdorff.  Its
continuous linear functionals are exactly those realized by the pairing: If
$\lambda\colon V\to\mathbb{R}$ is linear and weakly continuous,
then there is a unique $w\in W$ such that
\[
    \lambda(v)=\langle v,w\rangle \qquad\text{for all } v\in V.
\]
Similarly, the pairing gives a weak topology on $W$, whose continuous linear functionals are exactly the
maps $w\mapsto \langle v,w\rangle$ with $v\in V$.

This also gives the right notion of dual, or adjoint, of a continuous linear
map.  Suppose $(V_1,W_1)$ and $(V_2,W_2)$ are dual pairs, each equipped with
its weak topology, and let $\psi\colon V_1\to V_2$ be linear.  If $\psi$ is weakly 
continuous, then for every
$w_2\in W_2$ the functional
\[
    v_1\longmapsto \langle \psi(v_1),w_2\rangle
\]
is weakly continuous on $V_1$.  Since all such continuous
linear functionals are represented by elements of $W_1$, there is a unique
element $\psi^*(w_2)\in W_1$ such that
\[
    \langle \psi(v_1),w_2\rangle=\langle v_1,\psi^*(w_2)\rangle
    \qquad\text{for all }v_1\in V_1.
\]
The assignment $w_2\mapsto \psi^*(w_2)$ is linear, and defines the dual map
\(
    \psi^*\colon W_2\to W_1.
\)

Pairings also give a natural duality for convex cones.  Suppose
$C\subseteq V$ is a convex cone.  Its dual cone is
\[
    C^\vee=\{w\in W\mid \langle v,w\rangle\geqslant 0
    \text{ for all } v\in C\}.
\]
Thus $C^\vee$ consists of those linear functionals on $V$ which are realized
by elements of $W$ and are nonnegative on $C$.  Similarly, if
$D\subseteq W$ is a convex cone, its dual cone in $V$ is
\[
    D^\vee\coloneqq\{v\in V\mid \langle v,w\rangle\geqslant 0
    \text{ for all } w\in D\}.
\]
The double dual cone is the weak closure of the original cone:
\[
    (C^\vee)^\vee=\overline{C}.
\]
If $C\subseteq V$ and $D\subseteq W$ are convex cones, the pairing between $V$ and $W$ is called a \emph{conic pairing} when
\[
    D=C^\vee
    \qquad\text{and}\qquad
    C=D^\vee.
\]
So membership in one of the cones is exactly determined by positivity at each positive functional given by elements from the dual. This in particular implies that both cones are weakly closed.

Conic pairings are compatible with dual maps.  Suppose
$(V_1,W_1)$ and $(V_2,W_2)$ are dual pairs with cones
$C_i\subseteq V_i$ and $D_i\subseteq W_i$ satisfying $D_i=C_i^\vee$ and
$C_i=D_i^\vee$.  A linear map $\psi\colon V_1\to V_2$ is called \emph{positive} if
$\psi(C_1)\subseteq C_2$.  Now if $\psi$ is weakly continuous, then $\psi$ is positive if and only if its dual map
$\psi^*$ is positive.

\subsection{Duality of operator systems}

Most concepts from this section can also be found in \cite{Paulsen2002}, for example.

An \emph{Archimedean order unit space} is a real vector space $V$, together with a salient convex cone $C$ and a distinguished point $e\in C,$ called an \emph{Archimedean order unit}, with the following two properties: First, for every $v\in V$ there exists some $r>0$ such that $re+v\in C$ (order unit property). Second, whenever $\varepsilon e+v\in C$ for all $\varepsilon >0,$ then $v\in C$ (Archimedean property).
The associated \emph{order-unit norm} is
\[
    \|v\|_{\rm ou}=\inf\{r>0\mid re\pm v\in C\},
\]
and the topology induced by this norm is called the \emph{order topology}.

We start with some easy preparatory lemmas.

\begin{lemma}
\label{lem:ordertopweak}
    Let $(V,C,e_V)$ and $(W,D,e_W)$ be Archimedean order unit spaces in conic
    duality.  Then the order topology on $V$ is finer than the weak topology
    induced by $W$.
\end{lemma}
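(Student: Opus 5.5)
It suffices to show that for every fixed $w\in W$ the linear functional $v\mapsto\langle v,w\rangle$ is continuous with respect to the order-unit norm on $V$. The weak topology is the coarsest vector-space topology making all these functionals continuous, and the order topology is a vector-space topology. So once each such functional is norm-continuous, the weak topology is coarser than the order topology.

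To prove this continuity, I will use the order unit $e_W$ of $W$ to decompose $w$ into positive parts. By the order unit property there is $r>0$ with $re_W+w\in D$ and $re_W-w\in D$. Writing
\[
w=\tfrac12\bigl((re_W+w)-(re_W-w)\bigr),
\]
we express $w$ as a difference $w=d_1-d_2$ with $d_1,d_2\in D$. It therefore suffices to treat the case $w=d\in D$.

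For $d\in D$, the conic pairing gives $D=C^\vee$, so $\langle c,d\rangle\ge 0$ for all $c\in C$. Let $v\in V$ and $s>\|v\|_{\rm ou}$. Then $se_V\pm v\in C$, hence $\langle se_V\pm v,d\rangle\geqslant 0$. This yields
\[
|\langle v,d\rangle|\leqslant s\,\langle e_V,d\rangle .
\]
Letting $s$ decrease to $\|v\|_{\rm ou}$, we obtain $|\langle v,d\rangle|\leqslant \langle e_V,d\rangle\,\|v\|_{\rm ou}$. So each such functional is bounded, hence continuous, for the order-unit norm, and by linearity the same holds for general $w$.

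There is no real obstacle; the only point needing care is the decomposition of an arbitrary $w$ into positive elements, which is exactly where the order unit of $W$ enters. Only the inclusion $D\subseteq C^\vee$ from the conic pairing is used. That $\|\cdot\|_{\rm ou}$ is a genuine norm follows from the Archimedean property and salience, but continuity only needs the displayed seminorm bound.
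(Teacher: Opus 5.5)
Your proposal is correct and follows essentially the same route as the paper: you reduce to norm-continuity of each functional $v\mapsto\langle v,w\rangle$, write $w$ as a difference of elements of $D$ using the order unit $e_W$, and derive the bound $|\langle v,d\rangle|\leqslant\langle e_V,d\rangle\,\|v\|_{\rm ou}$ from $se_V\pm v\in C$. Your use of arbitrary $s>\|v\|_{\rm ou}$ in place of the paper's infimum over admissible $r$ is a harmless variant, valid because the set of admissible $r$ is upward closed.
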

\begin{proof}
    It is enough to show that, for each $w\in W$, the functional
    $v\mapsto\langle v,w\rangle$ is continuous for the order-unit norm on
    $V$.
    Since $W$ has an order unit, every element in $W$ is a difference of two elements from $D$. So we can restrict to the case $w\in D$. 
    
    Now take $v\in V$, choose $r>0$ with $re_V\pm v\in C,$ and obtain from duality
    \[
        0\leqslant \langle re_V\pm v,w\rangle
        =r\langle e_V,w\rangle\pm\langle v,w\rangle.
    \]
    This implies $\vert\langle v,w\rangle\vert\leqslant r\langle e_V,w\rangle,$ and taking the infimum over all such $r$ gives
    \[
        |\langle v,w\rangle|
        \leqslant \langle e_V,w\rangle\,\Vert v\Vert_{\rm ou},
    \]
    so the functional is order-norm continuous. 
\end{proof}

\begin{lemma}
\label{lem:dualsalient}
    Assume $C\subseteq V$ and $D\subseteq W$ are in conic duality, and that $C$ admits an order unit. Then $D$ is salient.
\end{lemma}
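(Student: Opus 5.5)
To show that $D$ is salient we must prove $D\cap(-D)=\{0\}$. So fix $w\in W$ with $w\in D$ and $-w\in D$, and aim to show $w=0$. Since the pairing separates points of $W$, it is enough to show $\langle v,w\rangle=0$ for every $v\in V$.

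First observe that $w$ annihilates $C$. By the conic pairing we have $D=C^\vee$. Then $w\in D$ gives $\langle c,w\rangle\geqslant 0$ for all $c\in C$, and $-w\in D$ gives $\langle c,w\rangle\leqslant 0$. Hence $\langle c,w\rangle=0$ for all $c\in C$. Note that only the inclusion $D\subseteq C^\vee$ is needed here.

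Next we use the order unit $e$ of $C$. Take an arbitrary $v\in V$ and choose $r>0$ with $re+v\in C$. Since $e\in C$, we also have $re\in C$. Writing
\[
v=(re+v)-re
\]
exhibits $v$ as a difference of two elements of $C$, so $V=C-C$. By linearity, $\langle v,w\rangle=\langle re+v,w\rangle-\langle re,w\rangle=0-0=0$.

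Since this holds for every $v\in V$, the separation property of the duality forces $w=0$, so $D$ is salient. There is no real obstacle. The only point needing care is that the order unit lies in $C$ itself, which is part of its definition, so $C$ spans $V$. The Archimedean property and the other half of the conic pairing, $C=D^\vee$, are not needed.
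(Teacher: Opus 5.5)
Your proof is correct and follows the paper's argument: an element of $D\cap(-D)$ annihilates $C$ because $D=C^\vee$, the order unit gives $V=C-C$, and separation of points of $W$ then forces it to be zero. Your explicit decomposition $v=(re+v)-re$ simply spells out a step the paper states in one line.
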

\begin{proof}
    Let $e\in C$ be an order unit for $C$, and suppose that
    $d\in D\cap(-D)$.  Since $D=C^\vee$, for every $c\in C$ we have both
    $\langle c,d\rangle\geqslant 0$ and
    $\langle c,-d\rangle\geqslant 0$.  Hence
    \[
        \langle c,d\rangle=0
        \qquad\text{for all }c\in C.
    \]
    Since $C$ has an order unit, every element of $V$ is a difference of two elements from $C$, and thus $d$ pairs trivially with every element from $V$.
    The separating property of
    the duality implies $d=0$. 
\end{proof}

An \emph{operator system} is a noncommutative extension of an Archimedean order unit space.  It consists of a sequence of Archimedean order unit spaces $({\rm Her}_m(V),C_m,I_m\otimes e)$, compatible with  scalar matrix conjugation: If $x\in {\rm Mat}_{m,n}(\mathbb{C})$
and $a\in C_m$, then
\[
    x^*ax\in C_n.
\]
Here we define ${\rm Her}_m(V)={\rm Her}_m(\mathbb C)\otimes V$ and use matrix conjugation on the first tensor factor.

The concrete model of an operator system is a unital complete order embedding
$V\hookrightarrow B(H)_{\rm sa}$, where $H$ is a Hilbert space and $B(H)_{\rm sa}$ is the  space of self-adjoint bounded operators on $H$.  In this case $e$ is the preimage of the identity operator, and
$C_m$ is the cone inherited from the positive operators in
$B(H^{\oplus m})$.  The Choi--Effros theorem says that every
operator system is of this form.

Given two  operator systems with cones
$C_m\subseteq {\rm Her}_m(V)$ and $D_m\subseteq {\rm Her}_m(W)$ and order units $e_V,e_W$, a linear map $\psi\colon V\to W$ is called \emph{unital} if $\psi(e_V)=e_W$. It is called 
\emph{m-positive} if
\[
  {\rm id}_{{\rm Her}_m(\mathbb C)}\otimes \psi\colon {\rm Her}_m(V)\to{\rm Her}_m(W)  
\]
is positive with respect to $C_m$ and $D_m$, and \emph{completely positive} if it is \(m\)-positive for all \(m\).

Any duality of vector spaces \[
    \langle\cdot,\cdot\rangle\colon V\times W\to\mathbb R
\]
extends to a duality of matrix spaces ${\rm Her}_m(V)$ and ${\rm Her}_m(W)$, by combining with the trace-inner product on matrices. These pairings are compatible with scalar matrix conjugation:
\[
    \langle x^*ax,b\rangle=\langle a,xbx^*\rangle
\]
for $a\in{\rm Her}_m(V), b\in{\rm Her}_n(W)$ and $x\in{\rm Mat}_{m,n}(\mathbb C).$ In particular, scalar matrix conjugation is weakly continuous.

\begin{lemma}
\label{lem:dualconj}
    Assume a family of convex cones $C_m\subseteq{\rm Her}_m(V)$ is closed under scalar matrix conjugation. Then for any given duality between $V$ and $W$, the family of levelwise duals $C_m^\vee\subseteq{\rm Her}_m(W)$ is also closed under scalar matrix conjugation. 
\end{lemma}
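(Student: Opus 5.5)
The plan is to use the adjoint identity for scalar matrix conjugation stated just before the lemma,
\[
    \langle x^*ax,b\rangle=\langle a,xbx^*\rangle
    \qquad (a\in{\rm Her}_m(V),\ b\in{\rm Her}_n(W),\ x\in{\rm Mat}_{m,n}(\mathbb C)),
\]
and simply move the conjugation from one side of the pairing to the other. We need to show that for $x\in{\rm Mat}_{m,n}(\mathbb C)$ and $b\in C_m^\vee\subseteq{\rm Her}_m(W)$ we have $x^*bx\in C_n^\vee\subseteq{\rm Her}_n(W)$.

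Fix such $x$ and $b$, and let $a\in C_n$ be arbitrary. Apply the displayed identity with the roles of the sizes swapped, i.e.\ to $x^*\in{\rm Mat}_{n,m}(\mathbb C)$, $a\in{\rm Her}_n(V)$ and $x^*bx\in{\rm Her}_n(W)$. Writing $y=x^*$, this gives
\[
    \langle a, x^*bx\rangle=\langle a,ybx^{\phantom{*}}\!\rangle=\langle y^{*}ay\,\cdot\,\rangle\ \text{(schematically)}=\langle xax^*,b\rangle .
\]
Concretely, this follows from cyclicity of the trace in the definition of the matrix pairing. If $a=\sum_i A_i\otimes v_i$ and $b=\sum_j B_j\otimes w_j$, then the pairing is $\sum_{i,j}\operatorname{tr}(A_iB_j)\langle v_i,w_j\rangle$. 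Conjugation acts only on the matrix factor, and $\operatorname{tr}(A\,x^*Bx)=\operatorname{tr}(xAx^*\,B)$.

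Since $C_\bullet$ is closed under scalar conjugation, $xax^*\in C_m$, where $x^*\in{\rm Mat}_{n,m}$ plays the role of the conjugating matrix. Hence $\langle xax^*,b\rangle\geqslant 0$ because $b\in C_m^\vee$. Therefore $\langle a,x^*bx\rangle\geqslant0$ for all $a\in C_n$, which is exactly $x^*bx\in C_n^\vee$.

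The only point needing care is bookkeeping. One must check that the trace identity really holds for the complexified pairing restricted to Hermitian parts, so that it takes real values. One must also apply the conjugation-closure hypothesis with the correct matrix, namely $x^*$ rather than $x$, and with the correct sizes. Neither step is a genuine obstacle.
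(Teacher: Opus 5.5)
Your proof is correct and is essentially the paper's argument: fix $a\in C_n$, use $\langle a,x^*bx\rangle=\langle xax^*,b\rangle$, and conclude from $xax^*\in C_m$ and $b\in C_m^\vee$. The ``schematically'' chain is garbled: the displayed identity should be applied to $x^*$, $a$ and $b\in{\rm Her}_m(W)$, not to $x^*bx$. Your trace-cyclicity computation $\operatorname{tr}(A\,x^*Bx)=\operatorname{tr}(xAx^*\,B)$ justifies the step correctly on its own.
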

\begin{proof}
    Let $b\in C_m^\vee$ and let $x\in {\rm Mat}_{m,n}(\mathbb C)$.  We have to
    show that $x^*bx\in C_n^\vee$.  Thus let $a\in C_n$.  By compatibility of
    the matrix pairings with scalar conjugation,
    \[
        \langle a,x^*bx\rangle=\langle xax^*,b\rangle.
    \]
    From $xax^*\in C_m$ and $b\in C_m^\vee$ it follows that
    $\langle xax^*,b\rangle\geqslant 0.$
    This holds for every $a\in C_n$, so $x^*bx\in C_n^\vee$.
\end{proof}

The central notion for what follows is the following form of duality for operator systems:
\begin{definition}
    Two  operator systems are in \emph{operator system duality}, if there is a duality of the underlying vector spaces, whose induced pairing is a conic pairing at every matrix level, i.e.\ for which the cones $C_m$ and $D_m$ are dual to each other. 
\end{definition}

Note that the definition is symmetric.  It does not single out one system as
\emph{the} dual of the other; instead, the matrix cones on both sides are required
to be exactly the dual cones of the cones on the opposite side. 

Given two such pairings, for each weakly continuous linear map
$\psi\colon V_1\to V_2$, $\psi$ is
completely positive if and only if  $\psi^*$ is completely positive.

\section{Existence of dual operator systems}

In this section we examine when an operator system admits a dual in the sense defined above.  Finite-dimensional systems behave as expected, but there are also genuinely infinite-dimensional and even self-dual examples.

\subsection{Finite-dimensional operator systems}

Finite-dimensional operator systems always admit duals. This is well-known since \cite{ChoiEffros1977}. Indeed, let
$\mathcal C$ be an operator system on the finite-dimensional space $V$, with convex cones $C_m\subseteq{\rm Her}_m(V).$ Put
$W=V^*$ and pair $V$ with $W$ by evaluation.  For each $m$, define
\[
    D_m\coloneqq C_m^\vee
    \subseteq {\rm Her}_m(V^*)
\]
with respect to the induced trace pairing between ${\rm Her}_m(V)$ and
${\rm Her}_m(V^*)$.

Since the cones $C_m$ are salient, their dual cones have nonempty interior.
Choosing an interior point of $D_1$ as order unit gives the usual dual
operator system. Thus the matrix cones of the dual are canonical, whereas its
order unit requires an additional choice.

\subsection{\texorpdfstring{$B(H)$ for a  separable Hilbert space}{B(H) for a Separable Hilbert space}}

The next example is the basic noncommutative infinite-dimensional one.  The
pairing is not the Banach-space duality of $B(H)$, but a weaker trace-class
pairing obtained from a positive trace-class operator with dense range.

\begin{theorem}
\label{thm:bh}
     Let $H$ be a separable Hilbert space. Then, as an operator system, $B(H)$ is in duality with itself.
\end{theorem}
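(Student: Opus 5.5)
The plan is to pair $B(H)_{\rm sa}$ with itself via a fixed positive trace-class operator with dense range, as the remark before the theorem suggests. Since $H$ is separable, choose an orthonormal basis $(\xi_k)$ and let $\rho=\sum_k 2^{-k}\xi_k\xi_k^*$. This $\rho$ is positive, injective, trace class, and has dense range. Put $T=\rho^{1/2}$, which is Hilbert--Schmidt, injective and has dense range. On $V=W=B(H)_{\rm sa}$ define
\[
    \langle a,b\rangle=\mathrm{tr}(T a T\, T b T)=\mathrm{tr}(\rho a\rho b).
\]
This is real for self-adjoint $a,b$, because $\mathrm{tr}(\rho a\rho b)=\mathrm{tr}\bigl((TaT)(TbT)\bigr)$ is the Hilbert--Schmidt inner product of two self-adjoint Hilbert--Schmidt operators.

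The first step is separation. If $\langle a,b\rangle=0$ for all $b$, take $b=a$ to get $\|TaT\|_{2}^2=0$, so $TaT=0$. Injectivity of $T$ then gives $aT=0$, and density of the range of $T$ gives $a=0$. By symmetry, separation holds on both sides.

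The matrix-level pairing is obtained by combining with the trace on ${\rm Her}_m(\C)$. For $a,b\in {\rm Her}_m(B(H))={\rm Her}_m(\C)\otimes B(H)_{\rm sa}$ it reads
\[
    \langle a,b\rangle=\mathrm{tr}(\rho_m a\rho_m b),\qquad \rho_m=I_m\otimes\rho,\ T_m=I_m\otimes T.
\]
I will check this formula on elementary tensors, taking care with the transpose convention, which may force using $\bar{b}$ or $b^{\rm T}$ in one factor. Any such twist is a $*$-anti-automorphism or automorphism on the matrix factor that preserves positivity, so it is harmless. The operator $\rho_m$ is again positive, injective, trace class, with dense range in $H^{\oplus m}$. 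Everything therefore reduces to the case $m=1$ with $H$ replaced by $H^{\oplus m}$, which is still separable.

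Next I show the cone is contained in its dual. If $a,b\geq 0$, then $\mathrm{tr}(TaT\,TbT)\geq 0$ because $TaT$ and $TbT$ are positive Hilbert--Schmidt operators, and the trace of a product of two positive Hilbert--Schmidt operators is nonnegative: it equals $\mathrm{tr}\bigl((TbT)^{1/2}TaT(TbT)^{1/2}\bigr)$.

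For the reverse inclusion, suppose $\langle a,b\rangle\geq 0$ for all $b\geq 0$. I must show $a\geq0$, which is the main step. I test against rank-one positives $b=\eta\eta^*$, giving $\langle \rho\eta, a\rho\eta\rangle\geq 0$ for every $\eta$. Hence $\langle \zeta,a\zeta\rangle\geq 0$ for all $\zeta$ in the range of $\rho$. The range of $\rho$ is dense (as $\rho$ is self-adjoint and injective), and $a$ is bounded, so continuity of $\zeta\mapsto\langle\zeta,a\zeta\rangle$ gives $a\geq 0$. Thus $C_m^\vee=C_m$, and by symmetry of the pairing the cones are dual to each other at every level. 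This proves operator system duality of $B(H)$ with itself.

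The only genuinely delicate points are bookkeeping ones. The first is confirming that the matrix extension of the pairing really is the trace pairing with $I_m\otimes\rho$, up to a harmless transpose. The second is the density argument, which needs $\rho$ to be injective with dense range; this is exactly where separability of $H$ is used, since it guarantees such a trace-class $\rho$ exists.
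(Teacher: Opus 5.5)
Your proposal is correct and is essentially the paper's proof: the same pairing ${\rm Tr}(\rho a\rho b)$ with a positive trace-class $\rho$ of dense range, cyclicity of the trace for $C\subseteq C^\vee$, rank-one tests $b=\eta\eta^*$ plus density of the range of $\rho$ for the converse, and passage to $H^{\oplus m}$ with $I_m\otimes\rho$. Your direct separation argument via $\Vert TaT\Vert_2^2$ is a harmless variant of the paper's ``same argument'' remark. One correction to your hedge: with the trace pairing ${\rm tr}(\alpha\beta)$ on ${\rm Her}_m(\mathbb C)$, the level-$m$ pairing is exactly ${\rm tr}(\rho_m a\rho_m b)$ with no transpose. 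This matters, because a transpose on the matrix factor alone is in general \emph{not} positive on ${\rm Mat}_m(B(H))$, so your claim that such a twist would be harmless is wrong (fortunately it is never needed).
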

\begin{proof}
   Consider a positive trace-class operator $\rho$ on $H$ with dense range. 
   Then $B(H)$ is in operator system duality with itself under the pairing
    \[
        \langle a,b\rangle
        \coloneqq{\rm Tr}(\rho a\rho b),
        \qquad a,b\in B(H)_{\rm sa}.
    \]
     If $a,b\geqslant 0$, then cyclicity of the trace gives
     \[
        {\rm Tr}(\rho a\rho b)
        ={\rm Tr}(b^{1/2}\rho a\rho b^{1/2})
        ={\rm Tr}\bigl((a^{1/2}\rho b^{1/2})^*
        (a^{1/2}\rho b^{1/2})\bigr)\geqslant 0.
     \]
    Conversely, if $a$ is not positive, choose $h\in H$ such that $\langle ah,h\rangle<0$.  
    Since the range of $\rho$ is dense, we may choose $x\in H$ with $\langle a\rho x,\rho x\rangle<0$. 
    For the rank-one positive operator $b=xx^*$ we get
    \[
        \langle a,b\rangle={\rm Tr}(\rho a\rho b)   =\langle a\rho x,\rho x\rangle<0.
    \]
    Thus the positive cone of $B(H)_{\rm sa}$ is self-dual for this pairing. The same argument also shows that the pairing separates points.
    Applying the same argument to $H^{\oplus m}$, with $I_m\otimes\rho$ in place of $\rho$, gives the required conic duality at every matrix level.
\end{proof}

\subsection{\texorpdfstring{$C^*$-algebras}{C*-algebras} with a faithful tracial state}

\begin{theorem}
\label{thm:trace}
    Let $(A,\tau)$ be a unital $C^*$-algebra with a faithful tracial state. Then, as an operator system, $A$ is in duality with itself.
\end{theorem}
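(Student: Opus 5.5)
Following the proof of \Cref{thm:bh}, the natural candidate is the trace pairing
\[
    \langle a,b\rangle\coloneqq \tau(ab),\qquad a,b\in A_{\rm sa},
\]
extended to ${\rm Her}_m(A)$ via the trace on matrices. This amounts to pairing $a,b\in{\rm Her}_m(A)=({\rm Mat}_m(A))_{\rm sa}$ by $(\mathrm{tr}_m\otimes\tau)(ab)$, possibly up to a transpose convention on the matrix factor that I will fix so that the combined pairing is exactly $\tau_m(ab)$ with $\tau_m={\rm Tr}\otimes\tau$. The first routine check is that $\tau(ab)$ is real for self-adjoint $a,b$. By traciality, $\overline{\tau(ab)}=\tau((ab)^*)=\tau(ba)=\tau(ab)$. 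Next, $\tau_m$ is a faithful positive trace on ${\rm Mat}_m(A)$, because faithfulness and traciality pass to matrix amplifications: for $x\in{\rm Mat}_m(A)$, $\tau_m(x^*x)=\sum_{i,j}\tau(x_{ij}^*x_{ij})$, which vanishes only if $x=0$. So it suffices to prove self-duality of the positive cone at level one for an arbitrary unital $C^*$-algebra with faithful tracial state, and then apply this to $({\rm Mat}_m(A),\tau_m/m)$.

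Separation is immediate. If $a\neq 0$ is self-adjoint, then $\langle a,a\rangle=\tau(a^2)>0$ by faithfulness. For the inclusion $A_+\subseteq A_+^\vee$, take $a,b\geqslant 0$. Traciality gives $\tau(ab)=\tau(b^{1/2}ab^{1/2})\geqslant 0$, since $b^{1/2}ab^{1/2}\geqslant 0$ and $\tau$ is positive. By symmetry of the pairing this handles both inclusions of the form ``positive pairs nonnegatively''.

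The main step is the converse: if $a$ is self-adjoint and $\tau(ab)\geqslant 0$ for all $b\geqslant 0$, then $a\geqslant 0$. Here I use continuous functional calculus rather than rank-one operators. Write $a=a_+-a_-$ with $a_\pm\geqslant 0$ and $a_+a_-=0$, and choose $b=a_-$. Then
\[
\tau(ab)=\tau(a_+a_-)-\tau(a_-^2)=-\tau(a_-^2).
\]
So $\tau(a_-^2)\leqslant 0$, which forces $a_-^2=0$ by faithfulness, hence $a_-=0$ and $a\geqslant 0$. The point to watch is that $b$ must lie in $A$ itself, not in a larger von Neumann algebra, which is why I use $a_-$ and not a spectral projection. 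Since $a_-=f(a)$ for a continuous $f$, it does lie in $A$, so this step goes through in the $C^*$-setting.

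Finally, I apply the same argument at each matrix level using the faithful trace $\tau_m$. I must also verify that the pairing on ${\rm Her}_m(A)$ induced by the recipe in the preliminaries, namely the base pairing combined with the trace-inner product on ${\rm Her}_m(\mathbb C)$, coincides with $(a,b)\mapsto\tau_m(ab)$. I expect this bookkeeping to be the only real obstacle. On elementary tensors $x\otimes a$ and $y\otimes b$ the recipe gives ${\rm Tr}(xy)\tau(ab)$, whereas $\tau_m\bigl((x\otimes a)(y\otimes b)\bigr)={\rm Tr}(xy)\tau(ab)$, so the two agree. Since both expressions are real on self-adjoint elements, they coincide on ${\rm Her}_m(A)$ by real-linearity. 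Hence the cones $M_m(A)_+$ are self-dual at every level, and $A$ is in operator system duality with itself.
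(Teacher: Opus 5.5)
Your proposal is correct and follows essentially the same route as the paper: the pairing $\tau(ab)$ extended to matrix levels via the faithful trace $\tau_m$ on ${\rm Mat}_m(A)$, separation via $\tau(a^2)>0$, nonnegativity via $\tau(b^{1/2}ab^{1/2})\geqslant 0$, and the converse by testing against $a_-$ and using $a_+a_-=0$. Your additional checks fill in details the paper leaves implicit, namely that $\tau(ab)$ is real, that $\tau_m$ is faithful, and that $\tau_m(xy)$ agrees on elementary tensors with the pairing induced via the trace inner product.
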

\begin{proof}
    We use the real vector space $A_{\rm sa}$ and the pairing
    \[
        \langle a,b\rangle=\tau(ab),
        \qquad a,b\in A_{\rm sa}.
    \]
    This pairing separates points: If $a\in A_{\rm sa}$ is nonzero, then
    $a^2$ is positive and nonzero, so faithfulness gives
    $\langle a,a\rangle=\tau(a^2)>0$.

    At matrix level $m$, identify ${\rm Her}_m(A_{\rm sa})$ with the
    self-adjoint part of ${\rm Mat}_m(A)$ and use the faithful trace
    \[
        \tau_m([a_{ij}])=\sum_{i=1}^m \tau(a_{ii}).
    \]
    The induced pairing is
    \[
        \langle x,y\rangle=\tau_m(xy),
        \qquad x,y\in {\rm Her}_m(A_{\rm sa}).
    \]
    If $x,y\geqslant 0$ in ${\rm Mat}_m(A)$, then
    \[
        \tau_m(xy)=\tau_m(y^{1/2}xy^{1/2})\geqslant 0,
    \]
    because $\tau_m$ is tracial and $y^{1/2}xy^{1/2}$ is positive.

    Conversely, suppose $x\in {\rm Her}_m(A_{\rm sa})$ is not positive.  Let
    $x=x_+-x_-$ be its positive and negative parts in the $C^*$-algebra
    ${\rm Mat}_m(A)$.  Thus $x_-$ is positive and nonzero.  Taking $y=x_-$ gives
    \[
        \langle x,y\rangle
        =\tau_m(xx_-)
        =-\tau_m(x_-^2)<0,
    \]
    since $x_+x_-=0$ and $\tau_m$ is faithful.  Hence any element pairing
    nonnegatively with all positive elements must itself be positive. This proves the claim.
\end{proof}

An important class of examples is given by matrix-valued continuous
functions.  Let $X$ be a compact Hausdorff space, let $\mu$ be a Borel probability measure with full support on $X$, and let $d\geqslant 1$.  Then
${\rm Mat}_d(C(X))$ is a unital $C^*$-algebra, naturally identified with
$C(X,{\rm Mat}_d(\mathbb C))$.  The functional
\[
    \tau(f)=\int_X {\rm tr}(f(x))\,d\mu(x)
\]
where ${\rm tr}$ is the normalized matrix trace, is a faithful tracial
state.  Hence ${\rm Mat}_d(C(X))$ is in operator system duality with itself.

Another important class is provided by reduced group $C^*$-algebras.  If
$G$ is a discrete group and $C^*_r(G)$ is generated by the left regular
representation $\lambda\colon \mathbb C[G]\to B(\ell^2(G))$, then
\[
    \tau(a)=\langle a\delta_e,\delta_e\rangle
\]
defines a faithful tracial state on $C^*_r(G)$. Therefore
$C^*_r(G)$ is also in operator system duality with itself.

Irrational rotation algebras give another interesting noncommutative example \cite{Davidson1996}. For
irrational $\theta\in\mathbb R$, let $A_\theta$ be the universal unital
$C^*$-algebra generated by unitaries $u$ and $v$ satisfying
\[
    vu=e^{2\pi i\theta}uv.
\]
The canonical trace is determined on the dense $*$-subalgebra of trigonometric
polynomials by
\[
    \tau(u^m v^n)=
    \begin{cases}
        1, & (m,n)=(0,0),\\
        0, & \text{otherwise}.
    \end{cases}
\]
For irrational $\theta$ this trace is faithful.  Hence $A_\theta$ is in
operator system duality with itself by the theorem.

\subsection{Subsystems and quotients}

We next establish how duality behaves with respect to important constructions with operator systems. The first result addresses realizations and quotients.

Assume  $V$ and $W$ form a dual pair, and let
$\iota\colon U\hookrightarrow V$ be an injective linear map.  Define
\[
    U^\perp:=\{w\in W\mid \langle\iota(u),w\rangle=0
    \text{ for all }u\in U\}.
\]
Then $U^\perp$ is a weakly closed subspace of $W$ with canonical projection map $\pi\colon W\to W/U^\perp,$ and there is an induced pairing between
$U$ and $W/U^\perp$ given by
\[
    \langle u,\pi(w)\rangle:=\langle\iota(u),w\rangle.
\]
The pairings make both $\iota$ and the projection
$\pi\colon W\twoheadrightarrow W/U^\perp$ weakly continuous and dual to each other.
Similarly, let $\pi\colon W\twoheadrightarrow X$ be a surjective
linear map with $K=\ker(\pi)$ weakly closed. Set
\[
    K^\perp\coloneqq\{v\in V\mid \langle v,k\rangle=0
    \text{ for all }k\in K\}\subseteq V.
\]
For $v\in K^\perp$ and $w\in W$, define
\[
    \langle v,\pi(w)\rangle:=\langle v,w\rangle.
\]
This is well defined and  separates
points of $K^\perp$, as well as points of $X$ since \( K=(K^\perp)^\perp\) holds by weak closedness of $K$. Now the inclusion $\iota\colon K^\perp\hookrightarrow V$ is the dual to $\pi,$ just as before.

Note that the weak topologies on $U=K^\perp$ and $X=W/U^\perp$  are the subspace and quotient topologies of $V$ and $W,$ respectively.

\begin{lemma}
\label{lem:dual}
    Let $\langle\cdot,\cdot\rangle$ be a conic pairing between $C\subseteq V$ and $D\subseteq W$.  Set $B:=\iota^{-1}(C)\subseteq U$ as well as $E\coloneqq\overline{\pi(D)},$ where the closure is taken in the weak topology on $W/U^\perp$ induced by $U$. Then $B$ and $E$ are dual to each other, with respect to the duality between $U$ and $W/U^\perp$ from above.
\end{lemma}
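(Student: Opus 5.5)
We prove the two identities $B^\vee=E$ and $E^\vee=B$ separately. Both rest on the definition of the induced pairing, $\langle u,\pi(w)\rangle=\langle\iota(u),w\rangle$, together with the facts $D=C^\vee$ and $C=D^\vee$ from the conic pairing.

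\textbf{Step 1: $E^\vee=B$.} Let $u\in U$. We have $u\in\pi(D)^\vee$ if and only if $\langle\iota(u),d\rangle\geqslant 0$ for all $d\in D$. This says $\iota(u)\in D^\vee=C$, i.e.\ $u\in B$. Hence $\pi(D)^\vee=B$. Each functional $\pi(w)\mapsto\langle u,\pi(w)\rangle$ is weakly continuous on $W/U^\perp$. So a weakly continuous functional that is nonnegative on $\pi(D)$ stays nonnegative on the weak closure $E$. This gives $E^\vee=\pi(D)^\vee=B$.

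\textbf{Step 2: $B^\vee=E$.} From Step 1 we get $B^\vee=(E^\vee)^\vee$. By the bipolar fact recalled in \Cref{sec:space}, the double dual cone equals the weak closure; it is applied here to the dual pair $(U,W/U^\perp)$ and the cone $E\subseteq W/U^\perp$. Thus $B^\vee=\overline{E}=E$, since $E$ is weakly closed by definition. The weak topology used to close $\pi(D)$ is exactly the one induced by $U$, so the bipolar theorem applies without any mismatch of topologies.

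\textbf{Main obstacle.} The only delicate point is Step 2, and specifically the use of the bipolar theorem in the quotient pair. One must check that $U$ and $W/U^\perp$ genuinely form a separating dual pair, which was established just before the lemma. One must also check that the closure defining $E$ is taken in the $U$-induced weak topology, as the statement specifies. No closure is needed on the $U$ side, because $B=\iota^{-1}(C)$ is automatically weakly closed: $C$ is weakly closed and $\iota$ is weakly continuous. The closure on the $W/U^\perp$ side is genuinely necessary, since $\pi(D)$ need not be closed.
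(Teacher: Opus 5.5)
Your proof is correct and uses the same ingredients as the paper: the identity $\langle u,\pi(w)\rangle=\langle\iota(u),w\rangle$, the equality $C=D^\vee$, and separation/bipolarity in the pair $(U,W/U^\perp)$, only in the reverse order. The paper first proves $B^\vee=E$ by an explicit separation argument and then deduces $E^\vee=B$ from weak closedness of $B$, whereas you compute $E^\vee=\pi(D)^\vee=B$ directly (so closedness of $B$ comes for free) and then obtain $B^\vee=E$ from the bipolar theorem.
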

\begin{proof}
    Since $D=C^\vee$, every element of $\pi(D)$ is nonnegative on $B$: If $u\in B$ and $d\in D$, then $\iota(u)\in C$, and hence
    \[
        \langle u,\pi(d)\rangle=\langle\iota(u),d\rangle\geqslant 0.
    \]
    Thus $\pi(D)\subseteq B^\vee$.  The cone $B^\vee$ is weakly closed, so
    \(E\subseteq B^\vee\).

    Conversely, let $x\in W/U^\perp$ satisfy $x\notin E$.
    By the separation theorem for dual pairs, there is some $u\in U$ such that
    \[
        \langle u,x\rangle<0 \qquad\text{and}\qquad \langle u,\pi(d)\rangle\geqslant 0 \text{ for all } d\in D.
    \]
    The second condition says
    \(\langle\iota(u),d\rangle\geqslant 0\) for all $d\in D$.  Since $C=D^\vee$, this implies $\iota(u)\in C$, hence $u\in B$.  But then from 
    \(\langle u,x\rangle<0\) we obtain $x\notin B^\vee$.  
    Therefore $B^\vee\subseteq E$, proving equality. This also implies $E^\vee=B,$ since $C$ and thus $B$ are weakly closed.
\end{proof}

\begin{theorem}
\label{thm:real}
    Assume the operator systems $\mathcal C, \mathcal D$ with cones $C_m\subseteq{\rm Her}_m(V), D_m\subseteq{\rm Her}_m(W)$ and Archimedean order units $e_V,e_W$ are in duality. Then for any injective linear map $\iota\colon U\hookrightarrow V$ with $e_V\in{\rm im}(\iota)$, the system $\mathcal B$ on $U$ defined by
    \[
        B_m\coloneqq ({\rm id}\otimes \iota)^{-1}(C_m)
    \]    
    admits a dual. Similarly, if $\pi\colon W\twoheadrightarrow X$ is a surjective linear map with weakly closed kernel and $e_V\in{\rm ker}(\pi)^\perp$, the system $\mathcal E$ on $X$ defined by 
    \[
      E_m=\overline{({\rm id}\otimes\pi)(D_m)},  
    \] 
    where the closure is taken in the quotient topology on $X$, admits a dual.
\end{theorem}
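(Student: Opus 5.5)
The plan is to apply \Cref{lem:dual} at every matrix level, and then verify that both sides carry operator system structures, with order units. For the subsystem part, pair $U$ with $W/U^\perp$ as in the preceding discussion. At level $m$, ${\rm Her}_m(U)$ is paired with ${\rm Her}_m(W/U^\perp)={\rm Her}_m(W)/{\rm Her}_m(U^\perp)$. The map ${\rm id}\otimes\iota$ is injective, and its annihilator in ${\rm Her}_m(W)$ is exactly ${\rm Her}_m(U^\perp)$, since the trace pairing is entrywise. So \Cref{lem:dual} applies levelwise, and gives that $B_m$ and $E'_m:=\overline{({\rm id}\otimes\pi)(D_m)}$ are in conic duality.

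It remains to check that $(B_m)$ and $(E'_m)$ are operator systems.

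\emph{The system $\mathcal B$.} Put $e_U=\iota^{-1}(e_V)$. The cones $B_m$ are salient, closed under conjugation, and have $I_m\otimes e_U$ as an Archimedean order unit. All of this is inherited from $C_m$, because $\iota$ is injective and compatible with conjugation. The order unit property uses $I_m\otimes e_V\in{\rm im}({\rm id}\otimes\iota)$.

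\emph{The system $(E'_m)$: conjugation.} Closedness under conjugation follows from \Cref{lem:dualconj}, since $E'_m=B_m^\vee$.

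\emph{The system $(E'_m)$: salience.} Salience follows from \Cref{lem:dualsalient}, since $B_m$ has an order unit.

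\emph{The system $(E'_m)$: order unit.} This is the main obstacle. The candidate is $I_m\otimes\pi(e_W)$. Given $x\in{\rm Her}_m(W/U^\perp)$, lift it to $y\in{\rm Her}_m(W)$. Choose $r$ with $rI_m\otimes e_W+y\in D_m$ and apply ${\rm id}\otimes\pi$. This gives $rI_m\otimes\pi(e_W)+x\in E'_m$, so the order unit property holds.

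\emph{The system $(E'_m)$: Archimedean property.} Since $E'_m$ is weakly closed, and $\varepsilon\mapsto \varepsilon I\otimes\pi(e_W)+x$ converges weakly to $x$ as $\varepsilon\to 0$, the Archimedean property is automatic. Weak closedness is in fact the key tool that rescues the Archimedean property without needing norm estimates. So the quotient-side dual is an operator system, and $\mathcal B$ admits a dual.

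\emph{The quotient part.} The second statement is handled symmetrically with $K=\ker\pi$. Pair $X$ with $K^\perp\subseteq V$. At each level, the kernel of ${\rm id}\otimes\pi$ is ${\rm Her}_m(K)$, which is weakly closed, and its annihilator is ${\rm Her}_m(K^\perp)$. Using $U:=K^\perp$ and $W/U^\perp\cong X$ (valid since $K=(K^\perp)^\perp$), \Cref{lem:dual} shows that $E_m$ and $({\rm id}\otimes\iota)^{-1}(C_m)$ are in conic duality, the latter being a cone in ${\rm Her}_m(K^\perp)$. The hypothesis $e_V\in K^\perp$ says exactly that $e_V\in{\rm im}(\iota)$. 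Hence the cones $({\rm id}\otimes\iota)^{-1}(C_m)$ form the operator system $\mathcal B$ from the first part, now with $U=K^\perp$.

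By the arguments above, $\mathcal E$ is then an operator system with order unit $\pi(e_W)$ and salient, closed, conjugation-invariant cones, and it is in duality with $\mathcal B$. One detail must be checked: the quotient topology closure used in the statement must coincide with the weak closure from \Cref{lem:dual}. This holds by the remark that the weak topology on $W/U^\perp$ equals the quotient topology.
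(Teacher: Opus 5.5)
Your proposal is correct and follows essentially the same route as the paper: you apply \Cref{lem:dual} levelwise, push the order unit $I_m\otimes\pi(e_W)$ forward through the surjection, get salience and conjugation-invariance from \Cref{lem:dualsalient} and \Cref{lem:dualconj}, deduce the Archimedean property from weak closedness, and reduce the quotient case to the subsystem case with $U=\ker(\pi)^\perp$. You are somewhat more explicit than the paper about the matrix-level identifications, namely that the annihilator of ${\rm Her}_m(U)$ is ${\rm Her}_m(U^\perp)$, and that the quotient topology agrees with the weak topology induced by $U=K^\perp$ because this $U$ is weakly closed.
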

\begin{proof}
    First note that $\mathcal B$, as defined in the theorem, is actually an operator system on $U$, with Archimedean order unit $\iota^{-1}(e_V)$. We now use the induced duality between $U$ and $W/U^\perp$, as well as
    the quotient map $\pi\colon W\to W/U^\perp$. We set 
    \[
        E_m\coloneqq\overline{({\rm id}\otimes\pi)(D_m)}\subseteq {\rm Her}_m(W/U^\perp)
    \]       
    and obtain from \Cref{lem:dual} the duality between $\mathcal B$ and $\mathcal E$ levelwise. It remains to show that $\mathcal E$ is really an operator system. But as the image of a cone with order unit under a surjective linear map, $E_m$ admits the order unit $I_m\otimes\pi(e_W)$. By \Cref{lem:dualsalient}, $E_m$ is salient, and by \Cref{lem:dualconj}, the family is compatible with scalar matrix conjugations. Finally, the Archimedean property follows from weak closedness of $E_m$, which is a dual cone. Thus $\mathcal B$ is in duality with the operator system $\mathcal E$.

    For the second statement, put $U=\ker(\pi)^\perp\subseteq V$ and give
    $U$ the subsystem structure inherited from $\mathcal C$.  The assumption
    $e_V\in U$ makes this a unital subsystem. Applying the first part to the
    inclusion $U\hookrightarrow V$ identifies its dual cones with
    \[
        \overline{({\rm id}\otimes\pi)(D_m)}=E_m,
    \]
    so $\mathcal E$ is in duality with that subsystem and therefore admits a
    dual.
\end{proof}

\begin{remark}
    The first statement of \Cref{thm:real} generalizes
    \cite[Theorem~5.1]{DannemuellerNetzer2026Subhomogeneous} to infinite
    dimensions. Indeed, in the finite-dimensional setting of that result, a
    $d$-subhomogeneous operator system is realized as a subsystem of
    ${\rm Mat}_d(C(X)),$ where $X$ is a compact subset of a finite dimensional Euclidean space. Choosing a full-support measure on $X$ makes this
    ambient system self-dual by \Cref{thm:trace}, and \Cref{thm:real} then
    identifies the dual of the subsystem with a quotient of ${\rm Mat}_d(C(X))$.
    
    More generally, this argument only requires a realization of the
    subhomogeneous system as a subsystem of ${\rm Mat}_d(C(X))$ for a compact
    Hausdorff space $X$ admitting a Borel probability measure with full
    support (the existence of such a measure is not automatic for arbitrary
    compact Hausdorff spaces).  It is automatic if the operator system is
    separable.  Indeed, starting from any realization in ${\rm Mat}_d(C(X))$,
    choose a countable norm-dense subset and identify points of $X$ on which
    all matrix entries of its elements agree.  The resulting quotient $Y$ is
    compact and metrizable, every element of the system factors through $Y$,
    and the system is therefore realized in ${\rm Mat}_d(C(Y))$.  Since $Y$
    is separable, it admits a Borel probability measure with full support.
\end{remark}

\begin{corollary}
    Every operator system whose $C^*$-envelope is separable, or admits a
    faithful trace, has a dual.
\end{corollary}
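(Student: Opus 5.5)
The plan is to realize the given operator system $\mathcal S$ as a unital subsystem of an ambient system already known to be self-dual, and then apply the first part of \Cref{thm:real}. Concretely, let $A=C^*_e(\mathcal S)$ be the $C^*$-envelope, with the canonical unital complete order embedding $\iota\colon \mathcal S\hookrightarrow A$. The matrix cones of $\mathcal S$ are then exactly $({\rm id}\otimes\iota)^{-1}$ of the positive cones of ${\rm Mat}_m(A)$, and $\iota$ maps the unit to the unit. So once $A$ is known to be in operator system duality with some system, \Cref{thm:real} gives a dual for $\mathcal S$.

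The faithful trace case is immediate. If $A$ admits a faithful tracial state $\tau$ (a faithful trace can be normalized, since $\tau(1)>0$), then \Cref{thm:trace} shows that $A$ is in duality with itself via $\langle a,b\rangle=\tau(ab)$, and we conclude as above.

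For the separable case, take a faithful nondegenerate representation $\pi\colon A\to B(H)$ on a separable Hilbert space $H$. This exists because a separable $C^*$-algebra has a faithful state: take a countable dense family of states $\varphi_n$ and form $\sum 2^{-n}\varphi_n$. The GNS representation of that state acts on the closure of $\pi(A)\xi$, which is separable since $A$ is. The map $\pi\circ\iota$ is then a unital complete order embedding of $\mathcal S$ into $B(H)$, because $\pi$ is an injective $*$-homomorphism and hence a complete order embedding at every matrix level. By \Cref{thm:bh}, $B(H)$ is self-dual, so \Cref{thm:real} applies with $V=B(H)_{\rm sa}$ and $U=\mathcal S_{\rm sa}$.

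The only point needing care is this construction of a faithful representation on a separable space, together with the observation that the envelope is not really needed beyond supplying the embedding. Indeed, any separable $C^*$-algebra containing $\mathcal S$ unitally and completely order-isomorphically would do. Everything else is a direct citation of \Cref{thm:bh}, \Cref{thm:trace} and \Cref{thm:real}.
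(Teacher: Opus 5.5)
Your proposal is correct and follows the paper's proof: embed the system unitally and completely order-isomorphically into its $C^*$-envelope, obtain a dual for the envelope either from \Cref{thm:trace} or from a faithful representation on a separable Hilbert space combined with \Cref{thm:bh}, and then apply the first part of \Cref{thm:real}. The only difference is that you construct the faithful separable representation yourself, via the faithful state $\sum 2^{-n}\varphi_n$ and its GNS representation, whereas the paper simply cites this standard fact.
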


\begin{proof}
    Every separable $C^*$-algebra admits a faithful representation on a separable Hilbert space \cite[Theorem~7.10]{Conway2000}. So the statement follows from 
    \Cref{thm:real,thm:bh,thm:trace}.
\end{proof}

\subsection{Tensor products}

Tensor products provide another construction where our  definition behaves
naturally. 

\begin{lemma}
\label{lem:psd}
    Assume $\mathcal C$ and $\mathcal D$ are operator systems in duality. Then for $c=\sum_i a_i\otimes v_i\in C_m$ and $d=\sum_j b_j\otimes w_j\in D_n,$ the matrix
    \[
      \sum_{i,j} \langle v_i,w_j\rangle\cdot a_i\otimes b_j^{\mathsf T}
      \in {\rm Her}_{mn}(\mathbb C)
    \]
    is positive semidefinite.
\end{lemma}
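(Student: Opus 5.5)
To prove the statement, I will test the matrix $P:=\sum_{i,j}\langle v_i,w_j\rangle\, a_i\otimes b_j^{\mathsf T}$ against vectors and use the conjugation invariance of the cones together with the conic duality at level $1$. A Hermitian matrix in ${\rm Her}_{mn}(\mathbb C)$ is positive semidefinite as soon as $\xi^* P\xi\geqslant 0$ for every $\xi\in\mathbb C^m\otimes\mathbb C^n$. By linearity it is convenient to identify $\xi$ with an $m\times n$ matrix $X\in{\rm Mat}_{m,n}(\mathbb C)$ via $\xi=\sum_{k,l}X_{kl}\,e_k\otimes e_l$. A routine index computation then gives
\[
    \xi^*(a\otimes b^{\mathsf T})\xi={\rm Tr}(X^*aX\,b),
\]
up to a possible transpose or complex conjugate on $X$ that I will fix carefully. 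The transpose on $b_j$ in the statement is exactly what makes this identity come out as a trace of a product rather than something twisted.

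With this identity,
\[
    \xi^*P\xi=\sum_{i,j}\langle v_i,w_j\rangle\,{\rm Tr}(X^*a_iX\,b_j),
\]
which is precisely the level-$n$ pairing $\langle X^*cX,d\rangle$ of the element
\[
    X^*cX=\sum_i X^*a_iX\otimes v_i\in{\rm Her}_n(V)
\]
with $d=\sum_j b_j\otimes w_j\in{\rm Her}_n(W)$, using that the level-$n$ pairing is the trace pairing combined with $\langle\cdot,\cdot\rangle$. The cone $C_m$ is closed under scalar matrix conjugation, so $X^*cX\in C_n$. The operator system duality says $D_n=C_n^\vee$, hence $\langle X^*cX,d\rangle\geqslant 0$. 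Alternatively, one can use the compatibility $\langle x^*ax,b\rangle=\langle a,xbx^*\rangle$ and pair at level $m$ with $XdX^*\in D_m$ by \Cref{lem:dualconj}; either route works.

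I also need to check that $P$ is Hermitian. This follows because $c$ and $d$ are Hermitian, so one may take all $a_i$ and $b_j$ Hermitian with $v_i,w_j$ real (self-adjoint) elements, and then each $a_i\otimes b_j^{\mathsf T}$ is Hermitian with real coefficients. In any case nonnegativity of the quadratic form over $\mathbb C$ already forces Hermiticity.

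The only real obstacle I expect is the bookkeeping in the first step: getting the convention for the trace pairing on ${\rm Her}_m$ (whether it is ${\rm Tr}(ab)$ or ${\rm Tr}(ab^{\mathsf T})$) to match the transpose and the vectorization of $\xi$. I would settle this by computing on rank-one $X=xy^*$ first, where
\[
    \xi^*(a\otimes b^{\mathsf T})\xi=(x^*ax)(y^*b\,y)\quad\text{or}\quad (x^*ax)\,\overline{(y^*by)}.
\]
Both of these are nonnegative for positive $a,b$, and I would then extend by sesquilinearity.
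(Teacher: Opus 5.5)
Your proposal is correct and is essentially the paper's own argument. Both write $\xi=\operatorname{vec}(X)$, use the identity $\xi^*(a_i\otimes b_j^{\mathsf T})\xi={\rm Tr}(X^*a_iXb_j)$, and conclude $\xi^*P\xi=\langle X^*cX,d\rangle\geqslant 0$ from $X^*cX\in C_n$ and $D_n=C_n^\vee$. With your identification $\xi=\sum_{k,l}X_{kl}\,e_k\otimes e_l$, that identity holds exactly, with no extra conjugate or transpose on $X$.
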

\begin{proof}
    Set $a=\sum_{i,j} \langle v_i,w_j\rangle\cdot
    a_i\otimes b_j^{\mathsf T}$, take $x\in\mathbb C^{mn}$, and write
    $x=\operatorname{vec}(y)$ for $y\in {\rm Mat}_{m,n}(\mathbb C)$, using
    the vectorization convention for which
    \[
        \operatorname{vec}(y)^*(a_i\otimes b_j^{\mathsf T})
        \operatorname{vec}(y)={\rm tr}(y^*a_iyb_j).
    \]
    Since $c\in C_m$ and the cones of
    $\mathcal C$ are closed under scalar matrix conjugation,
    \[
        y^*cy=\sum_i y^*a_iy\otimes v_i\in C_n.
    \]
    Since $d\in D_n=C_n^\vee$, we obtain
    \[
        0\leqslant \langle y^*cy,d\rangle
        =\sum_{i,j}\langle v_i,w_j\rangle\,
          {\rm tr}(y^*a_iyb_j)=x^*ax.
    \]
    So $a$ is indeed positive semidefinite.
\end{proof}

We now consider tensor products of operator systems, as defined and studied in \cite{KavrukPaulsenTodorovTomforde2011}. However, note that we will use the notions \emph{minimal} and \emph{maximal} in the opposite way, to match with the usual use for convex cones.

Let $V$ and $W$, as well as $X$ and $Y$, be spaces in duality.  On the algebraic
tensor products $V\otimes X$, $W\otimes Y$ we use the pairing defined by
\[
    \langle v\otimes x,w\otimes y\rangle
    =\langle v,w\rangle\langle x,y\rangle.
\]
Now further assume $\mathcal C$ and $\mathcal D,$ as well as $\mathcal E$ and $\mathcal F,$ are operator systems in duality on these spaces. We define the tensor product $\mathcal C\tilde\otimes_{\min}\mathcal E$ at level $m$ 
as the weak closure of the cone generated by all elements
\[
  x^*(c\otimes e)x, \qquad c\in C_s, e\in E_t, x\in{\rm Mat}_{st,m}(\C).
\]
Note that this is precisely the levelwise weak closure of the maximal tensor product from \cite{KavrukPaulsenTodorovTomforde2011}.
We then define  $\mathcal D\tilde\otimes_{\max}\mathcal F$ as the levelwise dual of $\mathcal C\tilde\otimes_{\min}\mathcal E.$ 

\begin{proposition}
\label{lem:minmax}
We have $\mathcal C\tilde\otimes_{\min}\mathcal E\ \subseteq\ \mathcal C\tilde\otimes_{\max}\mathcal E.$
\end{proposition}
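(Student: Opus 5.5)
Here $\mathcal C\tilde\otimes_{\max}\mathcal E$ is understood, symmetrically to the definition above, as the levelwise dual of $\mathcal D\tilde\otimes_{\min}\mathcal F$ with respect to the pairing between $V\otimes X$ and $W\otimes Y$. The plan is to show that every element of the level-$m$ cone of $\mathcal C\tilde\otimes_{\min}\mathcal E$ pairs nonnegatively with every element of the level-$m$ cone of $\mathcal D\tilde\otimes_{\min}\mathcal F$.

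\textbf{Reduction to generators.} A dual cone is weakly closed and depends only on the cone generated by a set, and the pairing is weakly continuous in each variable. It therefore suffices to check nonnegativity on generators of both sides. So take $c\in C_s$, $e\in E_t$, $x\in{\rm Mat}_{st,m}(\C)$, and $d\in D_u$, $f\in F_v$, $y\in{\rm Mat}_{uv,m}(\C)$. We must show
\[
\langle x^*(c\otimes e)x,\ y^*(d\otimes f)y\rangle\geqslant 0 .
\]
Passing to the closure on the $\mathcal C\tilde\otimes_{\min}\mathcal E$ side is then automatic, because the set of elements pairing nonnegatively with a fixed element is weakly closed. The same holds for the closure on the other side.

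\textbf{Rewriting the pairing as a quadratic form.} Write $c=\sum_i a_i\otimes v_i$, $e=\sum_k a'_k\otimes x_k$, $d=\sum_j b_j\otimes w_j$ and $f=\sum_l b'_l\otimes y_l$. Set $z:=xy^*\in{\rm Mat}_{st,uv}(\C)$. Cyclicity of the trace in the matrix pairing gives
\[
\sum_{i,j,k,l}\langle v_i,w_j\rangle\langle x_k,y_l\rangle\,
{\rm tr}\bigl(z^*(a_i\otimes a'_k)z(b_j\otimes b'_l)\bigr).
\]
Using the vectorization identity from the proof of \Cref{lem:psd}, each trace equals
\[
\operatorname{vec}(z)^*\bigl((a_i\otimes a'_k)\otimes(b_j\otimes b'_l)^{\mathsf T}\bigr)\operatorname{vec}(z).
\]

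\textbf{Applying \Cref{lem:psd}.} A fixed permutation unitary $\Pi$ reorders the four tensor factors. It turns $(a_i\otimes a'_k)\otimes(b_j^{\mathsf T}\otimes b_l'^{\mathsf T})$ into $(a_i\otimes b_j^{\mathsf T})\otimes(a'_k\otimes b_l'^{\mathsf T})$. The whole sum then becomes
\[
\operatorname{vec}(z)^*\,\Pi^*(P\otimes Q)\Pi\,\operatorname{vec}(z),
\]
where
\[
P=\sum_{i,j}\langle v_i,w_j\rangle\, a_i\otimes b_j^{\mathsf T},\qquad
Q=\sum_{k,l}\langle x_k,y_l\rangle\, a'_k\otimes b_l'^{\mathsf T}.
\]
By \Cref{lem:psd}, applied to the duality of $\mathcal C,\mathcal D$ and to that of $\mathcal E,\mathcal F$, both $P$ and $Q$ are positive semidefinite. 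Hence $P\otimes Q\geqslant 0$, and the quadratic form is nonnegative, which proves the claim.

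\textbf{Main obstacle.} The only delicate point is the index bookkeeping in the second step. One has to make sure the transpose conventions and the ordering of tensor factors under $\operatorname{vec}$ really match those in \Cref{lem:psd}, so that $\Pi$ is a genuine permutation unitary. I would verify this first on elementary tensors $a_i=E_{pq}$, $b_j=E_{rs}$ and extend by linearity.
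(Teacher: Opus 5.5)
Your proof is correct and follows the paper's argument. You reduce to generators, apply \Cref{lem:psd} to both dual pairs, and recognize the pairing as the quadratic form of the reordered positive matrix $P\otimes Q$ at a vectorized matrix. The only difference is bookkeeping: because your $z=xy^*$ lies in ${\rm Mat}_{st,uv}(\C)$, a pure permutation of tensor factors suffices, whereas the paper pairs $c\otimes e$ against $x^*(d\otimes f)x$ and therefore also needs a transpose-and-flip step.
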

\begin{proof}
    By definition of the maximal tensor product as the levelwise dual of the
    minimal tensor product of the duals, it is enough to check that
    \[
        \langle c\otimes e, x^*(d\otimes f)x\rangle\geqslant 0
    \]
    holds for all $c\in C_s$, $e\in E_t$, $d\in D_p$, $f\in F_q$, and
    $x\in {\rm Mat}_{pq,st}(\mathbb C).$
    Write
    \[
        c=\sum_i a_i\otimes v_i,
        \qquad d=\sum_j b_j\otimes w_j,
        \qquad
        e=\sum_k c_k\otimes x_k,
        \qquad f=\sum_l d_l\otimes y_l .
    \]
    Applying \Cref{lem:psd} to the dual pair $\mathcal C,\mathcal D$ gives
    the positive scalar matrix
    \[
        a=\sum_{i,j}\langle v_i,w_j\rangle\cdot
        a_i\otimes b_j^{\mathsf T}
        \in {\rm Her}_{sp}(\mathbb C),
    \]
    and applying the same lemma to $\mathcal E,\mathcal F$ gives the positive matrix
    \[
        b=\sum_{k,l}\langle x_k,y_l\rangle\cdot
        c_k\otimes d_l^{\mathsf T}
        \in {\rm Her}_{tq}(\mathbb C).
    \]
    Hence $a\otimes b$ is positive. Permuting the tensor factors, taking the
    transpose, and flipping the two resulting blocks preserves positivity.
    By the vectorization identity from \Cref{lem:psd}, the quadratic form of
    the resulting positive matrix at the vector corresponding to $x$ is
    \[
        \langle c\otimes e, x^*(d\otimes f)x\rangle.
    \]
    This pairing is therefore nonnegative, which proves the required
    inequality.
\end{proof}

\begin{theorem}
    $\mathcal C\tilde\otimes_{\min}\mathcal E$ and $\mathcal D\tilde\otimes_{\max}\mathcal F$ are operator systems in duality.
\end{theorem}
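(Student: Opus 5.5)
The plan is to verify two things separately: first, that the pairing on the algebraic tensor products yields conic duality at every matrix level; second, that both families of cones are genuinely operator systems, with Archimedean order units $e_V\otimes e_X$ and $e_W\otimes e_Y$.

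For the first point, we begin by checking that the pairing $\langle v\otimes x,w\otimes y\rangle=\langle v,w\rangle\langle x,y\rangle$ separates points on $V\otimes X$ and $W\otimes Y$. Given a nonzero $z=\sum_i v_i\otimes x_i$ with the $x_i$ linearly independent, some $v_{i}$ is nonzero. Since $X$ separates in the pair $(X,Y)$ and the $x_i$ are finitely many and independent, we can choose $y_1,\dots$ in $Y$ forming a dual family to the $x_i$. Choosing $w$ with $\langle v_i,w\rangle\neq 0$ then gives a nonvanishing pairing; the other side is symmetric. By definition $\mathcal D\tilde\otimes_{\max}\mathcal F$ is the levelwise dual of $\mathcal C\tilde\otimes_{\min}\mathcal E$, and the latter is weakly closed at each level by construction. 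The bipolar identity $(C^\vee)^\vee=\overline C$ from \Cref{sec:space} therefore gives that the dual of $\mathcal D\tilde\otimes_{\max}\mathcal F$ is again $\mathcal C\tilde\otimes_{\min}\mathcal E$. Hence the pairing is conic at every level.

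For the second point, we treat the two sides in turn. The min side is closed under scalar conjugation by construction, since $y^*x^*(c\otimes e)xy$ is again a generator and conjugation is weakly continuous. The max side is closed under conjugation by \Cref{lem:dualconj}. Both families are weakly closed, because each is a dual cone, and weak closedness yields the Archimedean property once order units are known. For salience we use \Cref{lem:dualsalient}: it suffices that the opposite cone admits an order unit. It therefore remains to show that $I_m\otimes e_V\otimes e_X$ is an order unit for the min cone at level $m$, and similarly that $I_m\otimes e_W\otimes e_Y$ is an order unit for the max cone.

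The main obstacle is this order unit property, in particular on the max side, which is only defined as a dual. Our approach is to show that $I_m\otimes e_W\otimes e_Y$ is already an order unit for the smaller cone $\mathcal D\tilde\otimes_{\min}\mathcal F$, which is contained in $\mathcal D\tilde\otimes_{\max}\mathcal F$ by \Cref{lem:minmax} applied with the roles of the pairs swapped. For the min tensor product this is the standard argument from \cite{KavrukPaulsenTodorovTomforde2011}. Any hermitian element of ${\rm Her}_m(V\otimes X)$ is a finite sum of terms $a\otimes v\otimes x$ with $v,x$ hermitian, up to symmetrization. For hermitian $v$ and $x$ with $r e_V\pm v\in C_1$ and $s e_X\pm x\in E_1$, we use the identity
\[
2(rs\, e\otimes e \pm v\otimes x)=(re\pm v)\otimes(se+x)+(re\mp v)\otimes(se-x).
\]
This shows that $rs\,e\otimes e\pm v\otimes x$ lies in the cone generated by products of positive elements. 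Scalar hermitian coefficients $a$ are then absorbed using $\|a\|I_m\pm a\geqslant 0$. General matrix entries are reduced to this case via the level-one decomposition of ${\rm Her}_m$ into real combinations of $E_{ii}$, $E_{ij}+E_{ji}$ and $i(E_{ij}-E_{ji})$, each tensored with hermitian elements of $V\otimes X$, using conjugation-compatibility. With order units on both sides, \Cref{lem:dualsalient} gives salience of both families, and this completes the proof that both are operator systems in duality.
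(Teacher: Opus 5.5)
Your proposal is correct and follows essentially the same route as the paper. Conic duality holds by construction through bipolarity, conjugation-compatibility comes from the construction and \Cref{lem:dualconj}, salience comes from \Cref{lem:dualsalient}, and the order unit on the max side comes from the inclusion $\mathcal D\tilde\otimes_{\min}\mathcal F\subseteq\mathcal D\tilde\otimes_{\max}\mathcal F$ given by \Cref{lem:minmax} with the roles swapped. The differences are minor: you prove the order-unit property of the generated cone directly via the polarization identity instead of citing Kavruk--Paulsen--Todorov--Tomforde, and you explicitly check that the tensor pairing separates points, which the paper leaves implicit.
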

\begin{proof}
    It was shown in \cite{KavrukPaulsenTodorovTomforde2011} that their maximal tensor product is an operator system. Its levelwise weak closure remains compatible with scalar conjugation, has the same order unit, and is Archimedean because it is weakly closed. Thus for our minimal tensor product it remains only to check that the cones are salient. But since $\mathcal D\tilde\otimes_{\min}\mathcal F$ also has an order unit, its dual $\mathcal C\tilde\otimes_{\max}\mathcal E$ has salient cones by \Cref{lem:dualsalient}.
    From \Cref{lem:minmax} we get that also the cones of $\mathcal C\tilde\otimes_{\min}\mathcal E$ are salient. So all of our tensor products are indeed operator systems. Duality of  $\mathcal C\tilde\otimes_{\min}\mathcal E$ and $\mathcal D\tilde\otimes_{\max}\mathcal F$ is true by construction.
\end{proof}

\subsection{Non-uniqueness of the dual}
Duals need not be unique up to complete order isomorphism.  Let
 $\lambda$ be the Lebesgue measure on $[0,1]$.  The $C^*$-algebra
$C([0,1])$ is in duality (as an operator system) both with $C([0,1])$ and with
$L^\infty([0,1],\lambda)$, using in each case the pairing
\[
    \langle f,g\rangle=\int_0^1 f(x)g(x)\,d\lambda(x).
\]
They are not completely order isomorphic, since a unital complete order isomorphism is an isometry for the order-unit norm, while
$C([0,1])$ is separable and $L^\infty([0,1],\lambda)$ is not.

Since these are already $C^*$-algebras, this shows that also the  $C^*$-envelope of a dual system is not determined by the envelope of the primal system.

Also, duality of operator systems does not imply duality of the $C^*$-envelopes.
Let $C$ be a finite-dimensional
polyhedral cone which is not a simplex cone, and let $C^\vee$ be its dual
cone.  For instance, one may take the cone over a square.  The maximal
operator system over $C$ is dual, under the canonical finite-dimensional
pairing, to the minimal operator system over $C^\vee$.
By \cite{FritzNetzerThom2017}, the maximal operator system over a
closed cone has a finite-dimensional realization exactly when the cone is
polyhedral, while, among polyhedral cones, the minimal operator system has a
finite-dimensional realization exactly when the cone is a simplex cone
\cite{FritzNetzerThom2017}.  Thus the maximal system over $C$ has a
finite-dimensional realization, and therefore its $C^*$-envelope is
finite-dimensional.  On the other hand, since $C^\vee$ is again polyhedral
and not a simplex cone, the minimal system over $C^\vee$ has no
finite-dimensional realization.  Its $C^*$-envelope is consequently
infinite-dimensional.  Hence these two dual operator systems have $C^*$-envelopes of
different algebraic dimensions, and those envelopes cannot form a
dual pair.

\subsection{Systems without a dual}

\begin{lemma}
\label{lem:state}
    Assume the order-unit cones $C\subseteq V$ and $D\subseteq W$ are in
    conic duality, with order units $e_V$ and $e_W$, respectively. Then $C$
    admits a faithful state.
\end{lemma}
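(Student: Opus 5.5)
The natural candidate for a faithful state is the functional given by the order unit of the other side. I would define
\[
    \varphi(v)\coloneqq \frac{\langle v,e_W\rangle}{\langle e_V,e_W\rangle},\qquad v\in V,
\]
and show that it is well defined, positive, normalized, and strictly positive on $C\setminus\{0\}$. Positivity is immediate: $e_W\in D=C^\vee$, so $\langle c,e_W\rangle\geqslant 0$ for every $c\in C$.

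The key step is faithfulness. Suppose $c\in C$ with $\langle c,e_W\rangle=0$; I want to conclude $c=0$. Take any $d\in D$. Since $e_W$ is an order unit for $D$, there is $r>0$ with $re_W-d\in D$. Pairing with $c\in C=D^\vee$ gives
\[
    0\leqslant \langle c,re_W-d\rangle = -\langle c,d\rangle,
\]
while also $\langle c,d\rangle\geqslant 0$. Hence $\langle c,d\rangle=0$ for all $d\in D$. Because $D$ has an order unit, every element of $W$ is a difference of two elements of $D$. So $c$ pairs trivially with all of $W$, and the separating property of the pairing forces $c=0$. This is the same argument as in \Cref{lem:dualsalient}, with the roles of the two sides swapped.

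It remains to normalize. Applying the faithfulness step to $c=e_V$ shows $\langle e_V,e_W\rangle>0$, since $e_V\neq 0$ (assuming $V\neq 0$; the trivial case is vacuous). Therefore $\varphi$ is well defined, $\varphi(e_V)=1$, $\varphi\geqslant 0$ on $C$, and $\varphi(c)>0$ for all nonzero $c\in C$. If continuity is also wanted, \Cref{lem:ordertopweak} shows that $\varphi$ is order-norm continuous.

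I do not expect a real obstacle here. The only subtle point is making sure the strict positivity uses both that $e_W$ is an order unit and that $C=D^\vee$.
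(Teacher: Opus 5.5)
Your proof is correct and follows the paper's argument: positivity from $e_W\in D=C^\vee$, faithfulness by pairing $c$ against elements built from the order unit $e_W$, then normalizing via $\langle e_V,e_W\rangle>0$. The only differences are minor. The paper pairs $c$ directly with $re_W\pm w$ for arbitrary $w\in W$, whereas you first show $c$ annihilates $D$ and then use $W=D-D$. Also, the case $V=0$ is not ``vacuous'': the zero space has no state at all, so that case is excluded by the tacit convention $e_V\neq 0$, which the paper likewise relies on when it concludes $\varphi(e_V)>0$.
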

\begin{proof}
    Let $e=e_W$. Define
    \[
        \varphi(v)=\langle v,e\rangle,
        \qquad v\in V.
    \]
    Since $D=C^\vee$, this functional is nonnegative on $C$. We first claim
    that it is faithful. Let $c\in C$ and suppose that $\varphi(c)=0$. For an
    arbitrary $w\in W$, the order-unit property gives some $r>0$ such that
    $re\pm w\in D$.  Pairing these two elements with $c$
    gives
    \[
        0\leqslant \langle c,e\rangle\pm\langle c,w\rangle=\pm\langle c,w\rangle.
    \]
    Hence $\langle c,w\rangle=0$ for
    every $w\in W$, and duality implies $c=0$. In particular,
    $\varphi(e_V)>0$, where $e_V$ is the order unit of $C$. Dividing
    $\varphi$ by $\varphi(e_V)$ therefore gives a faithful state.
\end{proof}

\begin{corollary}
    If $H$ is a non-separable Hilbert space, then $B(H)$ admits no dual
    operator system in our sense.
\end{corollary}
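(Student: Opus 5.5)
By \Cref{lem:state}, it suffices to show that the positive cone of $B(H)_{\rm sa}$ admits no faithful state when $H$ is non-separable. Suppose $B(H)$ were in operator system duality with some system $\mathcal D$. Then at level $1$ the cones $C_1=B(H)_+$ and $D_1$ are in conic duality and both carry order units. \Cref{lem:state} then produces a linear functional $\varphi$ on $B(H)_{\rm sa}$ that is nonnegative on $B(H)_+$, faithful on $B(H)_+$, and normalized by $\varphi(I)=1$. Note that $\varphi$ is an abstract positive functional on the order-unit space. It need not be weakly continuous or normal, but it is automatically order-norm bounded, since positivity gives $|\varphi(v)|\leqslant \varphi(I)\|v\|_{\rm ou}$.

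To derive the contradiction, I will choose an uncountable orthonormal family $(h_i)_{i\in J}$ in $H$, which exists because $H$ is non-separable. Let $p_i$ be the rank-one projection onto $\mathbb C h_i$. These are pairwise orthogonal nonzero positive operators, so faithfulness gives $\varphi(p_i)>0$ for every $i\in J$. For any finite subset $F\subseteq J$, the operator $\sum_{i\in F}p_i$ is a projection, so $I-\sum_{i\in F}p_i\geqslant 0$. Positivity of $\varphi$ then gives $\sum_{i\in F}\varphi(p_i)\leqslant \varphi(I)=1$. Consequently, for each $n$ only finitely many $i$ satisfy $\varphi(p_i)>1/n$. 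It follows that only countably many $i$ have $\varphi(p_i)>0$, which contradicts the uncountability of $J$.

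The only point needing care is that \Cref{lem:state} really applies. The lemma requires only that the level-one cones be in conic duality and both admit order units. This holds for any operator system dual, since $D_1$ has order unit $e_W$ by the definition of an operator system. Everything else is the elementary counting argument above, which needs neither normality nor continuity of $\varphi$. I therefore do not expect a real obstacle; the key conceptual step is reducing nonexistence of a dual to nonexistence of a faithful state.
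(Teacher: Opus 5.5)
Your proof is correct and follows the paper's route: \Cref{lem:state} reduces the claim to the non-existence of a faithful state on $B(H)$, which is then ruled out. The paper only cites that last fact as well known, while you prove it with the standard counting argument over an uncountable family of pairwise orthogonal rank-one projections.
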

\begin{proof}
    It is well-known that the $C^*$-algebra $B(H)$ admits no faithful state, whereas
    \Cref{lem:state} shows that the existence of a dual operator system would
    imply the existence of one.
\end{proof}

\section{Consequences of duality}

Having established existence results and compatibility with constructions, we now turn to consequences.  The first one explains how finite-level testing on one side becomes finite-level generation on the dual side.

\subsection{Maximality vs.\ minimality}

Fix $d\geqslant 1$.  We call an operator system $\mathcal C$ on $V$
$d$-maximal if, for every $m$ and every $a\in{\rm Her}_m(V)$,
\[
    a\in C_m
    \quad\Longleftrightarrow\quad
    x^*ax\in C_d \mbox{ for all }x\in{\rm Mat}_{m,d}(\mathbb C).
\]
If $\mathcal C$ and $\mathcal D$ are dual to each other, we call
$\mathcal D$ $d$-minimal if each $D_m$ equals the weak closure of the cone
generated by all elements
\[
    xbx^*,
    \qquad b\in D_d,\ x\in{\rm Mat}_{m,d}(\mathbb C).
\]

\begin{remark}
    These notions have the usual morphism-theoretic characterizations, with weak
    continuity entering on the minimal side. An operator system $\mathcal C$
    is $d$-maximal if and only if every $d$-positive map from an operator
    system into $\mathcal C$ is completely positive. Indeed, if
    $\varphi\colon\mathcal E\to\mathcal C$ is $d$-positive and
    $a\in E_m,$ then
    \[
        x^*({\rm id}_m\otimes\varphi)(a)x=({\rm id}_d\otimes\varphi)(x^*ax)\in C_d
        \qquad\text{for all }x\in{\rm Mat}_{m,d}(\mathbb C),
    \]
    so $d$-maximality implies $({\rm id}_m\otimes\varphi)(a)\in C_m$.

    Dually, $\mathcal D$ is $d$-minimal if and only if every weakly
    continuous $d$-positive map from $\mathcal D$ into an operator system
    admitting a dual is completely positive. In fact, weak continuity sends
    the weakly closed cone generated by the elements $xbx^*$ into the weakly
    closed positive cone of the target. The converses follow by applying
    these properties to the identity maps between the given system and the
    maximal, respectively weakly closed minimal, matrix ordering determined
    by its cone at level $d$.

    In both characterizations it is enough to test matrix algebras. More
    precisely, $\mathcal C$ is $d$-maximal if and only if every $d$-positive
    map ${\rm Mat}_n(\mathbb C)\to\mathcal C$ is completely positive, for
    every $n$, and $\mathcal D$ is $d$-minimal if and only if every weakly
    continuous $d$-positive map
    $\mathcal D\to{\rm Mat}_n(\mathbb C)$ is completely positive, for every
    $n$. To see the first assertion, suppose that
    $\varphi\colon\mathcal E\to\mathcal C$ is $d$-positive and let
    $a\in E_m$. By the usual Choi correspondence, $a$ corresponds to a completely positive map
    \(
        \psi_a\colon{\rm Mat}_m(\mathbb C)\longrightarrow\mathcal E.
    \)
    The composition
    $\varphi\circ\psi_a$ is $d$-positive and hence by assumption already completely positive. Its Choi matrix is
    \(({\rm id}_m\otimes\varphi)(a),
    \)
    which therefore belongs to $C_m$. Since this holds for every $m$ and
    every $a\in E_m$, the map $\varphi$ is completely positive. For the
    second assertion, if the
    weakly closed cone generated by the level-$d$ cone were strictly smaller
    than $D_m$, separation would produce a weakly continuous $d$-positive map
    $\mathcal D\to{\rm Mat}_m(\mathbb C)$ that is not completely positive.
\end{remark}

\begin{theorem}
    Assume $\mathcal C,\mathcal D$ are operator systems in duality. Then $\mathcal C$ is $d$-maximal if and only if $\mathcal D$ is $d$-minimal.
\end{theorem}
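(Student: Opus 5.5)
Write $M_m\subseteq{\rm Her}_m(W)$ for the weak closure of the convex cone generated by all $xbx^*$ with $b\in D_d$ and $x\in{\rm Mat}_{m,d}(\mathbb C)$. Then $\mathcal D$ is $d$-minimal exactly when $M_m=D_m$ for all $m$. The plan is to compute the dual cone $M_m^\vee\subseteq{\rm Her}_m(V)$ and show
\[
    M_m^\vee=\{a\in{\rm Her}_m(V)\mid x^*ax\in C_d \text{ for all } x\in{\rm Mat}_{m,d}(\mathbb C)\}=:C_m^{(d)}.
\]
Once this identity holds, both directions follow from the bipolar relation $(K^\vee)^\vee=\overline K$ and the fact that $C_m$ and $D_m$ are weakly closed and mutually dual.

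To prove the identity, first note that taking the dual cone turns the weak closure and the conic hull into nothing. So $a\in M_m^\vee$ if and only if $\langle a,xbx^*\rangle\geqslant 0$ for all $b\in D_d$ and all $x\in{\rm Mat}_{m,d}(\mathbb C)$. By the compatibility of the matrix pairings with scalar conjugation, $\langle a,xbx^*\rangle=\langle x^*ax,b\rangle$. Hence the condition says that $x^*ax\in D_d^\vee=C_d$ for every $x$, which is exactly membership in $C_m^{(d)}$. Along the way I also record $M_m\subseteq D_m$: this holds because $D_d$ maps into $D_m$ under $b\mapsto xbx^*$ by compatibility with scalar conjugation (\Cref{lem:dualconj}, or directly from the operator system axioms), and $D_m$ is weakly closed. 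Dually, $C_m\subseteq C_m^{(d)}$ holds trivially.

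Now the two directions. If $\mathcal C$ is $d$-maximal, then $C_m=C_m^{(d)}=M_m^\vee$. Dualizing gives $D_m=C_m^\vee=(M_m^\vee)^\vee=\overline{M_m}=M_m$, so $\mathcal D$ is $d$-minimal. Conversely, if $D_m=M_m$, then $C_m=D_m^\vee=M_m^\vee=C_m^{(d)}$, which is $d$-maximality.

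The only point requiring care is the bipolar step $(M_m^\vee)^\vee=\overline{M_m}$. It needs the Hahn–Banach separation theorem for the dual pair $({\rm Her}_m(V),{\rm Her}_m(W))$, stated in \Cref{sec:space}. It also needs the closure in the definition of $d$-minimality to be taken in the weak topology induced by ${\rm Her}_m(V)$, and this is exactly how the definition is phrased. I therefore expect no serious obstacle; the proof is essentially a one-line duality computation plus the bipolar theorem.
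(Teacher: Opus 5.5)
Your proposal is correct and follows essentially the same route as the paper: you compute the dual of the weakly closed cone generated by the elements $xbx^*$ using the identity $\langle a,xbx^*\rangle=\langle x^*ax,b\rangle$, identify that dual with $\{a\mid x^*ax\in C_d \text{ for all } x\}$, and get both directions from bipolarity. Your forward step $D_m=C_m^\vee=(M_m^\vee)^\vee=M_m$ is just a slightly more direct phrasing of the paper's argument, which goes from $K_m^\vee=C_m=D_m^\vee$ to $K_m=D_m$.
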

\begin{proof}
    For each $m$, let $K_m\subseteq {\rm Her}_m(W)$ be the weakly closed cone
    generated by all elements $xbx^*$ with
    $b\in D_d$ and $x\in{\rm Mat}_{m,d}(\mathbb C)$.  We thus have $K_m\subseteq D_m$.

    We now compute the dual cone of $K_m$.  For $a\in{\rm Her}_m(V)$,
    \[
        a\in K_m^\vee
        \quad\Longleftrightarrow\quad
        \langle a,xbx^*\rangle\geqslant 0
        \text{ for all }b\in D_d,
        \ x\in{\rm Mat}_{m,d}(\mathbb C).
    \]
    Using compatibility of the matrix pairings with scalar conjugation, this
    is equivalent to
    \[
        \langle x^*ax,b\rangle\geqslant 0
        \text{ for all }b\in D_d,
        \ x\in{\rm Mat}_{m,d}(\mathbb C),
    \]
    and since $C_d=D_d^\vee$, this is equivalent to
    \[
        x^*ax\in C_d
        \text{ for all }x\in{\rm Mat}_{m,d}(\mathbb C).
    \]
    Hence
    \[
        K_m^\vee
        =\{a\in{\rm Her}_m(V)\mid x^*ax\in C_d
        \text{ for all }x\in{\rm Mat}_{m,d}(\mathbb C)\}.
    \]
    If $\mathcal C$ is $d$-maximal, then this cone is $C_m$.  Thus
    $K_m^\vee=C_m=D_m^\vee$.  Since both $K_m$ and $D_m$ are weakly closed,
    bipolarity gives $K_m=D_m$.  Therefore $\mathcal D$ is $d$-minimal.

    Conversely, assume $\mathcal D$ is $d$-minimal, so $K_m=D_m$ for every
    $m$.  Then for $a\in{\rm Her}_m(V)$,
    \[
        a\in C_m
        \ \Leftrightarrow\
        a\in D_m^\vee
        \ \Leftrightarrow\
        a\in K_m^\vee
        \ \Leftrightarrow\
        x^*ax\in C_d
        \text{ for all }x\in{\rm Mat}_{m,d}(\mathbb C).
    \]
    Thus $\mathcal C$ is $d$-maximal.
\end{proof}

\begin{corollary}
    For $B(H)$ with $H$ separable, and for $C^*$-algebras with a faithful tracial state,  $d$-minimality and $d$-maximality (with respect to the above self-duality) are equivalent.
\end{corollary}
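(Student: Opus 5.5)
The plan is to apply the preceding theorem with $\mathcal C=\mathcal D$ equal to the given system, regarded as its own dual through the self-dualities constructed earlier. For $B(H)$ with $H$ separable, \Cref{thm:bh} gives the pairing $\langle a,b\rangle={\rm Tr}(\rho a\rho b)$. For a unital $C^*$-algebra $A$ with faithful tracial state $\tau$, \Cref{thm:trace} gives the pairing $\langle a,b\rangle=\tau(ab)$. In both cases the underlying space $V=W$ is the same, the cones $C_m=D_m$ coincide, and the pairing is conic at every matrix level. So the hypotheses of the theorem hold with $\mathcal D=\mathcal C$.

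We read the statement with the convention fixed before the theorem: $d$-minimality of $\mathcal D$ is defined relative to the duality, as the requirement that $D_m$ be the weak closure of the cone generated by the elements $xbx^*$. Here the weak topology is the one induced by the self-pairing. The theorem then says that the system, viewed as $\mathcal C$, is $d$-maximal if and only if the same system, viewed as $\mathcal D$, is $d$-minimal. Since $\mathcal C$ and $\mathcal D$ are literally the same operator system with the same cones, this is exactly the claimed equivalence of $d$-maximality and $d$-minimality for that system.

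The only point needing care is that the pairing is symmetric, which makes the identification $\mathcal C=\mathcal D$ legitimate with the same weak topology on both sides. For the trace, $\tau(ab)=\tau(ba)$ gives symmetry. For $B(H)$, ${\rm Tr}(\rho a\rho b)={\rm Tr}(\rho b\rho a)$ follows from cyclicity of the trace. At matrix level $m$ the same holds with $\tau_m$ and $I_m\otimes\rho$ respectively. So the weak topology used to define $d$-minimality is the same whichever side we regard the system as lying on.

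I do not expect any real obstacle. The one subtlety is making explicit that the notion of $d$-minimality depends on the chosen self-duality, because the weak closure does. This is why the statement includes the parenthetical ``with respect to the above self-duality''.
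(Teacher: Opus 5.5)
Your proposal is correct and is exactly the argument the paper intends: the corollary is the preceding theorem specialized to the self-dualities of \Cref{thm:bh} and \Cref{thm:trace}, taking $\mathcal C=\mathcal D$. Your check that both pairings are symmetric correctly makes the weak topology unambiguous. It is not strictly needed, though, since operator system duality is a symmetric notion and you could also apply the theorem to the transposed pairing.
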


\subsection{Continuous and closed realizations}

Finally we discuss representation-theoretic consequences.  One should not
expect a weakly continuous concrete representation into a single $B(H)$ with
the norm topology, except in finite dimensions.  

\begin{lemma}
    Let $V,W$ be in duality and let $\rho\colon V\to B(H)_{\rm sa}$ be a linear map which is continuous from the weak topology to the usual operator norm on $B(H)$. Then $\rho$ has finite-dimensional range. In particular, if $\rho$ is injective, then $V$ is finite-dimensional.
\end{lemma}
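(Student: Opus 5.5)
We use the standard fact that a continuous linear map from a weak topology into a normed space is bounded by finitely many of the defining seminorms. The operator norm on $B(H)$ is a norm, and the weak topology on $V$ induced by $W$ is the locally convex topology generated by the seminorms $v\mapsto|\langle v,w\rangle|$, $w\in W$. Continuity of $\rho$ at $0$ therefore gives a weak neighbourhood $U$ of $0$ with $\rho(U)$ contained in the open unit ball of $B(H)$. By definition of the weak topology, we may shrink $U$ to a basic neighbourhood
\[
    U=\{v\in V\mid |\langle v,w_i\rangle|<\varepsilon,\ i=1,\dots,k\}
\]
for finitely many $w_1,\dots,w_k\in W$ and some $\varepsilon>0$.

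Next we show that $N:=\bigcap_{i=1}^k\ker\langle\cdot,w_i\rangle$ lies in $\ker\rho$. For $v\in N$ and every $t>0$ we have $tv\in U$, hence $t\|\rho(v)\|=\|\rho(tv)\|<1$. Letting $t\to\infty$ forces $\rho(v)=0$.

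Now $N$ is the kernel of the linear map $V\to\mathbb R^k$, $v\mapsto(\langle v,w_i\rangle)_i$, so $N$ has codimension at most $k$ in $V$. Since $\rho$ vanishes on $N$, it factors through the quotient $V/N$, which has dimension at most $k$. Thus ${\rm im}(\rho)$ has dimension at most $k$. If $\rho$ is injective, then $V\cong{\rm im}(\rho)$ is finite-dimensional.

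The only real point is the first step, which replaces an arbitrary weak neighbourhood by one cut out by finitely many functionals. This is exactly the description of the weak topology given in the preliminaries, as the coarsest topology making the maps $v\mapsto\langle v,w\rangle$ continuous, so there is no genuine obstacle. Note that the argument uses neither positivity nor any operator system structure.
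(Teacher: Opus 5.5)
Your proof is correct and follows the paper's own argument essentially step for step. Both take a basic weak neighbourhood $\{v : |\langle v,w_i\rangle|<\varepsilon\}$ mapped into the open unit ball, use scaling to show $N=\bigcap_i \ker\langle\cdot,w_i\rangle\subseteq\ker\rho$, and then factor $\rho$ through the finite-dimensional quotient $V/N$.
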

\begin{proof}
    Since $\rho$ is norm-continuous at the origin, there are elements
    $w_1,\ldots,w_n\in W$ and $\varepsilon>0$ such that
    \[
        |\langle v,w_i\rangle|<\varepsilon\quad(1\leqslant i\leqslant n)
        \qquad\Longrightarrow\qquad
        \Vert\rho(v)\Vert<1.
    \]
    Let
    \[
        N=\{v\in V\mid \langle v,w_i\rangle=0\text{ for }1\leqslant i\leqslant n\}.
    \]
    If $v\in N$, then $tv$ belongs to the above weak neighbourhood for every
    $t>0$. Hence $t\Vert\rho(v)\Vert=\Vert\rho(tv)\Vert<1$ for all $t>0$, and
    therefore $\rho(v)=0$. Thus $N\subseteq\ker\rho$, so $\rho$ factors through
    the finite-dimensional quotient $V/N$. Consequently $\rho(V)$ is
    finite-dimensional. 
\end{proof}

The correct replacement is an $\ell^\infty$-product of finite-dimensional matrix algebras with its product topology.

\begin{proposition}
\label{thm:productreal}
    Let $\mathcal C$ be an operator system on $V$ that admits a dual. Then $\mathcal C$ admits a realization
    in an $\ell^\infty$-product of matrix algebras, which is continuous from
    the weak topology on $V$ to the product topology.
\end{proposition}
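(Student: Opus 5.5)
The plan is to build the realization from the dual cones themselves. Let $\mathcal D$ on $W$ be a dual of $\mathcal C$, with cones $D_m$. Each $b=\sum_j b_j\otimes w_j\in D_m$ defines a linear map
\[
    \varphi_b\colon V\to{\rm Mat}_m(\mathbb C)_{\rm sa},\qquad
    \varphi_b(v)=\sum_j \langle v,w_j\rangle\, b_j^{\mathsf T},
\]
which is weakly continuous, since each entry is a finite combination of functionals $v\mapsto\langle v,w_j\rangle$. Index the family by all pairs $(m,b)$ with $b\in D_m$; if one wants a set of controlled size, normalize $\langle e_V,b\rangle=1$ or simply take all of them. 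We then set
\[
    \Phi\colon V\to\prod_{(m,b)}{\rm Mat}_m(\mathbb C),\qquad \Phi(v)=(\varphi_b(v))_{(m,b)}.
\]

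The key steps, in order, are as follows.

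First, show that each $\varphi_b$ is completely positive. For $a=\sum_i a_i\otimes v_i\in C_n$, the matrix $({\rm id}_n\otimes\varphi_b)(a)=\sum_{i,j}\langle v_i,w_j\rangle a_i\otimes b_j^{\mathsf T}$ is exactly the matrix of \Cref{lem:psd}, hence positive semidefinite.

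Second, show the converse: if $({\rm id}_n\otimes\varphi_b)(a)\geqslant 0$ for all $(m,b)$, then $a\in C_n$. Take $m=n$ and evaluate the quadratic form at the vectorization of the identity, i.e.\ the maximally entangled vector. Up to the transpose convention, this gives $\sum_{i,j}\langle v_i,w_j\rangle{\rm tr}(a_ib_j)=\langle a,b\rangle$. So $\langle a,b\rangle\geqslant 0$ for all $b\in D_n$, and $C_n=D_n^\vee$ yields $a\in C_n$. The same computation with $n=1$ gives injectivity: if $\Phi(v)=0$, then $\langle v,w\rangle=0$ for all $w\in D_1$, and $D_1$ spans $W$ because it has an order unit, so $v=0$ by separation. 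Hence $\Phi$ is a complete order embedding.

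Third, arrange that $\Phi$ lands in the $\ell^\infty$-product and is unital. For unitality, restrict to those $b$ with $\varphi_b(e_V)=I_m$; this requires compressing each $\varphi_b$ by $\varphi_b(e_V)^{-1/2}$ where it is invertible. For boundedness, note that a unital completely positive map satisfies $\|\varphi(v)\|\leqslant\|v\|_{\rm ou}$, so $\Phi(V)$ lies in the $\ell^\infty$-product. Weak continuity into the product topology holds coordinatewise by construction.

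The main obstacle is the unitality normalization. $\varphi_b(e_V)$ need not be invertible, and after compressing, the compressed maps must still detect membership in $C_n$. My first attempt is to replace $b$ by $b+\varepsilon I_m\otimes e_W$, which remains in $D_m$. Since $e_W$ is an order unit, the faithfulness argument of \Cref{lem:state} gives $\langle e_V,e_W\rangle>0$, so $\varphi_{b_\varepsilon}(e_V)\geqslant\varepsilon\langle e_V,e_W\rangle I>0$ is invertible. The map $\psi=\varphi_{b_\varepsilon}(e_V)^{-1/2}\varphi_{b_\varepsilon}(\cdot)\varphi_{b_\varepsilon}(e_V)^{-1/2}$ is then unital completely positive and has the same positivity-detecting power as $\varphi_{b_\varepsilon}$, since congruence by an invertible matrix preserves positivity. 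Letting $\varepsilon\to0$ in the pairing $\langle a,b_\varepsilon\rangle\to\langle a,b\rangle$ recovers the test $\langle a,b\rangle\geqslant0$, so the order-embedding property survives. Should this fail, I would fall back to a non-unital realization, which still gives a complete order embedding with each coordinate weakly continuous.
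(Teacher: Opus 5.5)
Your proof is correct and follows the paper's route: each element of $D_m=C_m^\vee$ becomes a completely positive map $V\to{\rm Her}_m(\mathbb C)$ with weakly continuous entries, these maps are normalized to be unital, and their product is a unital complete order embedding into the $\ell^\infty$-product. The differences are minor. For the normalization, the paper compresses to the support of $\psi(e_V)$ and then conjugates by its inverse square root, whereas you perturb $b$ to $b+\varepsilon I_m\otimes e_W$ and let $\varepsilon\to0$; this works equally well because $\langle e_V,e_W\rangle>0$. You also spell out, via \Cref{lem:psd} and the maximally entangled vector, the complete positivity and cone-detection steps that the paper leaves to the Choi correspondence.
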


\begin{proof}
    The Choi--Effros construction uses positive linear functionals on
    ${\rm Her}_m(V)$.  Under the trace pairing, such a functional corresponds
    to a completely positive map $\psi\colon V\to{\rm Her}_m(\mathbb C)$.
    Compressing to the support of $\psi(e_V)$ and conjugating by its inverse
    square root makes the map unital.  Taking the product of all resulting
    unital completely positive maps gives a unital complete order embedding.

    If $\mathcal D$ is a dual of $\mathcal C$, the positive functionals needed
    to separate every cone $C_m$ are precisely the functionals represented by
    elements of $D_m=C_m^\vee$.  Every matrix entry of the corresponding map
    $\psi$ has the form $v\mapsto\langle v,w\rangle$ for some $w\in W$.
    Consequently each coordinate map is weak-to-norm continuous, and their
    product
    \[
        V\to\prod_i^{\ell^\infty}{\rm Her}_{m_i}(\mathbb C)
    \]
    is continuous from the weak topology to the product topology.
\end{proof}

The preceding realization has weak$^*$ closed range, under a natural Banach-Alaoglu compactness
assumption on the duality.

\begin{theorem}
\label{thm:weakstarreal}
    Let the operator systems $\mathcal C$ on $V$ and $\mathcal D$ on $W$ be in
    duality.  Assume that the order-unit norm unit ball of $V$ is compact for
    the weak topology induced by $W$.  Then there are a Hilbert space $H$ and
    a realization
    \[
        \rho\colon V\to B(H)_{\rm sa}
    \]
    whose range is a weak$^*$ closed operator subsystem of $B(H)$.
\end{theorem}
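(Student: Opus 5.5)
We will use the realization $\rho$ from \Cref{thm:productreal}, with $H=\bigoplus_i\mathbb C^{m_i}$, so that $\prod_i^{\ell^\infty}{\rm Mat}_{m_i}(\mathbb C)$ sits inside $B(H)$ as a von Neumann algebra. By construction $\rho$ is a unital complete order embedding, hence an isometry for the order-unit norms, and it is continuous from the weak topology on $V$ to the product topology. The aim is to show that $\rho(V)$ is weak$^*$ closed. By the Krein--Smulian theorem, it suffices to show that the intersection of $\rho(V)$ with the closed unit ball of $B(H)$ is weak$^*$ closed.

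\textbf{Step 1: the two topologies agree on the unit ball of the product.} On bounded subsets of $\prod_i^{\ell^\infty}{\rm Mat}_{m_i}(\mathbb C)$, the weak$^*$ topology coincides with the product topology, that is, with entrywise convergence in each block. To see this, note that the predual is the $\ell^1$-sum of trace-class matrices, and a standard $\varepsilon/3$ argument truncating to finitely many blocks gives the equality on bounded sets. Since the product is a weak$^*$ closed subalgebra of $B(H)$, its unit ball is weak$^*$ compact.

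\textbf{Step 2: the image of the unit ball is compact.} Let $B_V$ denote the order-unit unit ball of $V$; by hypothesis it is weakly compact. Since $\rho$ is isometric for the order-unit norm, $\rho(V)\cap{\rm Ball}(B(H))=\rho(B_V)$. The map $\rho$ is continuous from the weak topology to the product topology, so $\rho(B_V)$ is compact in the product topology. By Step 1, $\rho(B_V)$ is therefore weak$^*$ compact, and in particular weak$^*$ closed. Since $\rho(V)$ is a subspace, Krein--Smulian applied to the convex set $\rho(V)$ then shows that $\rho(V)$ is weak$^*$ closed. The range is an operator subsystem: it contains the identity, is self-adjoint after complexification, and is completely order isomorphic to $\mathcal C$.

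\textbf{Main obstacle.} Two points need care. The first is that \Cref{thm:productreal} genuinely yields an isometric complete order embedding. This requires that the compressed unital maps still separate the cones, which follows from duality: every functional detecting non-positivity comes from $D_m$. The second is the passage from the real space $V$ to the complex span $V+iV$ inside $B(H)$. I would apply Krein--Smulian to the complexification, using that the complexified unit ball is contained in twice the real ball applied to real and imaginary parts. This gives boundedness, so the same compactness argument goes through.
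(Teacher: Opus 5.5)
Your proposal is correct and follows the paper's proof essentially step by step. Both take the realization from \Cref{thm:productreal}, use that the weak$^*$ and product topologies agree on bounded subsets of the $\ell^\infty$-product, and transport weak compactness of $B_V$ through the order-unit isometry to get weak$^*$ compactness of $\rho(V)$ intersected with the ball, before concluding with Krein--Smulian. Testing only the unit ball suffices because $\rho(V)$ is a subspace. Your treatment of the complexification is a harmless extra detail; the paper instead argues in the self-adjoint part $M$ and passes to $B(H)$ through the normal diagonal representation of $N$.
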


\begin{proof}
    By \Cref{thm:productreal}, there is a realization
    \[
        \rho\colon V\to
        M\coloneqq\prod_{i\in I}^{\ell^\infty}{\rm Her}_{m_i}(\mathbb C)
    \]
    which is continuous from the weak topology on $V$ to the product topology
    on $M$.  Since $\rho$ is a realization, it is an
    isometry for the order-unit norm.

    We claim that $\rho(V)$ is weak$^*$ closed in $M$, where $M$ is equipped
    with its canonical predual, the $\ell^1$-sum of the matrix spaces.  On
    each norm-bounded subset of $M$, the weak$^*$ topology agrees with the
    product topology.  Indeed, weak$^*$ convergence implies coordinatewise
    convergence.  Conversely, for a norm-bounded net, coordinatewise
    convergence implies convergence against every element of the $\ell^1$
    predual: One first makes the $\ell^1$-tail uniformly small and then uses
    convergence on the remaining finitely many coordinates.

    Let $B_V$ and $B_M$ denote the respective norm unit balls.  Since $\rho$
    is an isometry, for every $r>0$ we have
    \[
        \rho(V)\cap rB_M=\rho(rB_V).
    \]
    The set $rB_V$ is weakly compact by assumption, and $\rho$ is continuous
    from the weak topology to the product topology.  Since the product and
    weak$^*$ topologies agree on $rB_M$, the set $\rho(rB_V)$ is weak$^*$
    compact and hence weak$^*$ closed.  Thus the intersection of the linear
    subspace $\rho(V)$ with every closed norm ball is weak$^*$ closed.  The
    Krein--Smulian theorem now implies that $\rho(V)$ itself is weak$^*$
    closed in $M$.

    Finally, let
    \[
        N=\prod_{i\in I}^{\ell^\infty}{\rm Mat}_{m_i}(\mathbb C),
        \qquad M=N_{\rm sa}.
    \]
    The von Neumann algebra $N$ acts diagonally on the Hilbert space direct
    sum of the corresponding finite-dimensional spaces, and this
    representation is a weak$^*$ homeomorphism onto a weak$^*$ closed von
    Neumann subalgebra of $B(H)$.  Its restriction to $M$ therefore sends
    $\rho(V)$ onto a weak$^*$ closed operator subsystem of $B(H)$.
\end{proof}

\begin{remark}
    \Cref{thm:weakstarreal} is related to the setting from \cite{BlecherMagajna2011}.  They show that an
    operator system which is the dual of an operator space admits a completely
    isometric and weak$^*$ homeomorphic realization as a weak$^*$ closed
    operator subsystem of some $B(H)$.  Here no operator-space predual is
    assumed, but a dual in our sense, combined with compactness of the order-unit ball for the topology induced by the  dual system.
\end{remark}

\begin{proposition}
\label{prop:quotientcompact}
    Let the operator systems $\mathcal C$ on $V$ and $\mathcal D$ on $W$ be
    in duality, and assume that the order-unit norm unit ball $B_V$ is compact
    for the weak topology induced by $W$.  Let $K\subseteq V$ be weakly
    closed, assume that $e_W\in K^\perp$, and equip $X=V/K$ with the
    operator-system quotient from \Cref{thm:real}.  Suppose that $X$ is
    complete for its order-unit norm.
    Then the order-unit norm unit ball of $X$ is compact for the weak topology
    induced by the dual subsystem on $K^\perp$.  In particular,
    \Cref{thm:weakstarreal} applies to $X$.
\end{proposition}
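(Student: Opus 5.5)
The plan is to exhibit the unit ball $B_X$ of $X$ as a weakly closed subset of the image of a dilated unit ball $rB_V$ under the quotient map $\pi\colon V\to X$. That image is weakly compact, because $rB_V$ is weakly compact by assumption and $\pi$ is weakly continuous. As noted before \Cref{lem:dual}, the weak topology on $X=V/K$ induced by $K^\perp$ is the quotient of the weak topology on $V$, so $\pi$ is continuous for these topologies. A closed subset of a compact set is compact, which gives the claim. Once $B_X$ is compact, $X$ together with its dual subsystem on $K^\perp$ satisfies the hypotheses of \Cref{thm:weakstarreal}.

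\textbf{Step 1: $B_X$ is weakly closed.} Let $E_1$ be the level-one cone of $X$, and write $e_X=\pi(e_V)$ for its order unit. By \Cref{thm:real} and \Cref{lem:dual}, $E_1$ is the dual cone of the level-one cone of the subsystem on $K^\perp$, so it is weakly closed. By the Archimedean property,
\[
B_X=\{x\in X\mid e_X+x\in E_1\ \text{and}\ e_X-x\in E_1\},
\]
and this set is weakly closed because translation and negation are weakly continuous.

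\textbf{Step 2: compare the order-unit norm of $X$ with the quotient Banach norm.} First I will check that $V$ is complete for $\|\cdot\|_{\rm ou}$. By \Cref{lem:ordertopweak} the weak topology is coarser than the norm topology. A norm-Cauchy sequence lies in some $rB_V$, which is weakly compact, so it has a weak cluster point $v$. Since $rB_V$ is weakly closed for every $r$, the standard argument gives $\|v_n-v\|_{\rm ou}\leqslant\varepsilon$ eventually, so the sequence converges in norm to $v$.

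Next, $K$ is weakly closed and hence norm closed, so $V/K$ is a Banach space for the quotient norm
\[
\|x\|_q=\inf\{\|v\|_{\rm ou}\mid \pi(v)=x\}.
\]
The map $\pi$ is unital and positive, since $\pi(C_1)\subseteq E_1$. Therefore $\|\pi(v)\|_{\rm ou}\leqslant\|v\|_{\rm ou}$, which gives $\|x\|_{\rm ou}\leqslant\|x\|_q$ for all $x\in X$. Thus the identity $(X,\|\cdot\|_q)\to(X,\|\cdot\|_{\rm ou})$ is a bounded bijection between Banach spaces, using the assumed completeness of $X$ for its order-unit norm. By the open mapping theorem there is $r>0$ with $\|x\|_q\leqslant r\|x\|_{\rm ou}$.

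\textbf{Step 3: conclude.} By Step 2, every $x\in B_X$ lifts to some $v$ with $\|v\|_{\rm ou}\leqslant 2r$. Hence $B_X\subseteq\pi(2rB_V)$, and the latter set is weakly compact. Combined with Step 1, $B_X$ is weakly compact.

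I expect Step 2 to be the main obstacle, since the open mapping argument and the completeness of $V$ are where completeness and the compactness hypothesis really enter. I also need to verify that $\|\cdot\|_{\rm ou}$ on $X$ is a genuine norm, which holds because $E_1$ is salient by \Cref{lem:dualsalient}.
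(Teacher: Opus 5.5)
Your proposal is correct and follows essentially the same route as the paper: you prove completeness of $V$ from weak compactness of the balls, use an open mapping argument to get $B_X\subseteq\pi(cB_V)$, and show $B_X$ is weakly closed because $E_1$ is a dual cone. The only difference is cosmetic: you apply the bounded inverse theorem to the identity $(X,\|\cdot\|_q)\to(X,\|\cdot\|_{\rm ou})$, whereas the paper applies the open mapping theorem to the quotient map $q\colon V\to X$ directly, and these are equivalent.
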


\begin{proof}
    First observe that $V$ is complete for its order-unit norm.  Indeed, let
    $(v_n)$ be an order-norm Cauchy sequence.  It is contained in some multiple
    of $B_V$, and hence admits a subnet converging weakly to an element
    $v\in V$.  For every $\varepsilon>0$, the differences $v_n-v_m$ belong to
    $\varepsilon B_V$ whenever $n,m$ are sufficiently large.  Since
    $\varepsilon B_V$ is weakly closed, passing to the subnet limit shows that
    $v_n-v\in\varepsilon B_V$ for all sufficiently large $n$.  Thus $v_n$
    converges to $v$ in the order-unit norm.

    Let $q\colon V\to X$ be the quotient map.  By the discussion preceding
    \Cref{lem:dual}, the weak topology on $X$ induced by $K^\perp$ is the
    quotient topology, so $q$ is weakly continuous.  Moreover, $q$ is unital
    and positive, and hence contractive for the order-unit norms.  It is a
    surjective bounded linear map between Banach spaces, so the open mapping
    theorem provides a constant $c>0$ such that every $x\in X$ has a
    representative $v\in V$ with
    \[
        \lVert v\rVert_{\rm ou}\leqslant
        c\lVert x\rVert_{\rm ou}.
    \]
    Consequently,
    \[
        B_X\subseteq q(cB_V),
    \]
    where $B_X$ is the order-unit norm unit ball of $X$.  The set $q(cB_V)$
    is compact.  Moreover,
    \[
        B_X=\{x\in X\mid e_X\pm x\in E_1\},
    \]
    where $E_1$ is the positive cone of the quotient system.  This cone is
    weakly closed because it is the dual cone of the positive cone on
    $K^\perp$.  Hence $B_X$ is weakly closed, and is therefore a closed subset
    of the compact set $q(cB_V)$. Thus it is compact itself.
\end{proof}

\end{document}